\definecolor{webgreen}{rgb}{0,.5,0}
\definecolor{webbrown}{rgb}{.6,0,0}
\newcommand{\floor}[1]{\left\lfloor #1\right\rfloor}
\begin{document}

\theoremstyle{plain}
\newtheorem{theorem}{Theorem}
\newtheorem{corollary}[theorem]{Corollary}
\newtheorem{lemma}[theorem]{Lemma}
\newtheorem{proposition}[theorem]{Proposition}

\theoremstyle{definition}
\newtheorem{definition}[theorem]{Definition}
\newtheorem{example}[theorem]{Example}
\newtheorem{conjecture}[theorem]{Conjecture}

\theoremstyle{remark}
\newtheorem{remark}[theorem]{Remark}

\begin{center}
\vskip 1cm{\LARGE\bf 
There are no Collatz $m$-Cycles with $m\leq 91$
}
\vskip 1cm
\large
Christian Hercher\\
Institut f\"{u}r Mathematik\\
Europa-Universit\"{a}t Flensburg\\
Auf dem Campius 1b\\
24944 Flensburg\\
Germany \\
\href{mailto:christian.hercher@uni-flensburg.de}{\tt christian.hercher@uni-flensburg.de} \\
\end{center}

\vskip .2 in

\begin{abstract}
The Collatz conjecture (or ``Syracuse problem'') considers
recursively-defined sequences of positive integers where $n$ is
succeeded by $\tfrac{n}{2}$, if $n$ is even, or $\tfrac{3n+1}{2}$, if
$n$ is odd. The conjecture states that for all starting values $n$ the
sequence eventually reaches the trivial cycle $1, 2, 1, 2, \ldots$
. We are interested in the existence of nontrivial cycles.

Let $m$ be the number of local minima in such a nontrivial cycle. Simons
and de Weger proved that $m \geq 76$. With newer bounds on the range of
starting values for which the Collatz conjecture has been checked, one
gets $m \geq 83$. In this paper, we prove $m \geq 92$.

The last part of this paper considers what must be proven in order to
raise the number of odd members a nontrivial cycle has to have to the
next bound---that is, to at least $K\geq1.375\cdot 10^{11}$. We prove
that it suffices to show that, for every integer smaller than or equal to
$1536\cdot2^{60}=3\cdot2^{69}$, the respective Collatz sequence enters
the trivial cycle. This reduces the range of numbers to be checked by
nearly $60$\%.
\end{abstract}

\section{Introduction}

The Collatz conjecture\footnote{Named after German mathematician Lothar
Collatz, 1910--1990.} (or Syracuse problem) considers recursively-defined
sequences of positive integers.

\begin{definition}
Let $n$ be a positive integer. The Collatz operator $C: \mathbb{Z}_{>0}
\rightarrow \mathbb{Z}_{>0}$ is defined as follows: \[C(n):=\begin{cases}
\frac{n}{2}, & \text{ if $n$ is even;}\\ \frac{3n+1}{2}, & \text{ if
$n$ is odd.}\end{cases}\] We define an \textit{e-step} to be of type
$n\mapsto \tfrac{n}{2}$, since it occurs after an even number~$n$, and
an \textit{o-step} to be of type $n\mapsto \tfrac{3n+1}{2}$, since it
follows after an odd number~$n$.  \end{definition}

\begin{conjecture}[Collatz]
For all $n\in\mathbb{Z}_{>0}$, the sequence $n, C(n), C^2(n):=C(C(n)), C^3(n), \ldots$ eventually reaches the trivial cycle $1, 2, 1, 2, \ldots$ .
\end{conjecture}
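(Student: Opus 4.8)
The plan is to split the statement along the only two ways an orbit can fail to reach $1,2,1,2,\ldots$: it can be unbounded, or it can enter a cycle other than the trivial one. So I would prove two independent sub-claims, (A) \emph{no orbit of $C$ is unbounded}, and (B) \emph{the only cycle of $C$ is the trivial one}; together these give the conjecture, since a bounded orbit of a map on $\mathbb{Z}_{>0}$ must be eventually periodic, and the only periodic behavior left after (B) is $1,2,1,2,\ldots$. Throughout I would pass to the accelerated dynamics on odd integers, recording each odd number together with the number of $o$-steps and $e$-steps taken before the next odd number is reached; this is the coordinate system in which the competition between the factors $3$ and the factors $2$ is explicit.

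For sub-claim (B) I would exploit the exact algebraic description of a cycle. If a hypothetical nontrivial cycle performs $k$ $o$-steps and $\ell$ $e$-steps in one period, then iterating $C$ around the cycle and returning the least element $n_0$ to itself yields an identity of the form $(2^{k+\ell}-3^{k})\,n_0 = N$, where $N>0$ is an integer assembled from the $3$'s and $2$'s encountered along the cycle. Since $n_0$ is forced to be enormous—every integer below a large explicit threshold has already been \emph{computationally verified} to reach $1$—the factor $2^{k+\ell}-3^{k}$ must be tiny relative to $3^{k}$, so $2^{k+\ell}/3^{k}$ is an extraordinarily good approximation and the continued-fraction expansion of $\log_2 3$ severely restricts the admissible $(k,\ell)$. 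Combining this with a linear-forms-in-logarithms (Baker-type) lower bound on $\lvert (k+\ell)\log 2 - k\log 3\rvert$ pins down the possibilities and produces a lower bound on the cycle's complexity; this is precisely the mechanism behind the chain $m\geq 76$, $m\geq 83$, and the present $m\geq 92$.

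For sub-claim (A) I would set up the drift heuristic: an $o$-step multiplies by roughly $\tfrac{3}{2}$ and is typically followed by a geometrically distributed run of halvings, so the expected change of $\log n$ per odd step is negative. Making this rigorous in the strongest currently available form produces only a statement about \emph{almost all} starting values, in the sense of logarithmic density, rather than the universal ``for all $n$'' demanded here. The decisive and, I believe, immovable obstacle is exactly the gap between these two: there is no known quantity that provably decreases along \emph{every} Collatz orbit, and the favorable probabilistic drift still permits rare trajectories that climb for arbitrarily long before turning down, so a density estimate cannot be upgraded to a deterministic Lyapunov-type bound by any argument I can presently supply. Even sub-claim (B), which is in far better shape, closes only for each \emph{bounded} cycle complexity; removing the bound would require an unconditional lower bound on cycle elements growing faster than the approximability of $\log_2 3$ permits, or a verification threshold beyond any finite computation. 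I would therefore expect sub-claim (A) to remain the governing barrier to the full conjecture, with the cycle-theoretic machinery of (B) being the part on which concrete, incremental progress—such as the bounds established in this paper—is actually achievable.
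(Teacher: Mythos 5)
The statement you were asked to prove is the Collatz conjecture itself, and the paper offers no proof of it: it is stated as a \emph{Conjecture}, and everything that follows in the paper is conditional structure theory for hypothetical counterexamples (culminating in Theorem~\ref{Theorem17}, that no $m$-cycle with $m\leq 91$ exists). So the honest verdict on your proposal is that it is not a proof and does not claim to be one --- and that is the correct outcome. Your decomposition into sub-claim (A), no unbounded orbit, and sub-claim (B), no nontrivial cycle, is exactly the dichotomy the paper records in the remark following the conjecture (a bounded orbit on $\mathbb{Z}_{>0}$ is eventually periodic by pigeonhole, as you say). For (B), your sketch is a faithful summary of the actual machinery: a cycle with $K$ odd and $L$ even members forces $2^{K+L}-3^{K}$ to be small and positive, the computational threshold $X_0$ forces the minimal element to be huge, and the continued-fraction expansion of $\log_2 3$ then constrains $(K,L)$. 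The paper's specific route is to bound $\delta < \tfrac{K+L}{K} < \delta + \varepsilon$ via Theorem~\ref{Theorem11}, with $\varepsilon$ controlled through the reciprocal sums $T(n_i)$ (Theorem~\ref{Theorem_bessereT_Summe}, Theorem~\ref{Theorem15}) and the denominator lemma, Lemma~\ref{Lemma16}; the Baker-type linear-forms input you mention enters only indirectly, through the Simons--de Weger upper bound $K < 1.4784\, m\,\delta^m$ against which the iterated lower bounds collide. As you correctly note, this closes the problem only for bounded cycle complexity, not for all cycles.

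The genuine gap, then, is the one you yourself name: sub-claim (A) has no proof, and your drift argument provably cannot supply one, since the expected-decrease heuristic (made rigorous by Terras and, in its strongest form, by Tao) yields only almost-all statements in density, never a quantity that decreases along \emph{every} orbit. A density-$1$ result leaves room for infinitely many exceptional trajectories, so no amount of tightening that estimate upgrades it to the universal quantifier the conjecture demands; and even (B) unconditionally would require either an effective lower bound on cycle elements outrunning the approximability of $\log_2 3$ or an infinite verification. Your proposal should therefore be read as an accurate map of why the statement is open and where incremental progress is possible --- which is precisely the posture the paper takes --- rather than as a proof attempt with a repairable flaw.
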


\begin{remark}
While the Collatz conjecture is
currently unproven, there are good reasons to believe it holds. In \cite{RT}, for example, Terras showed that the respective Collatz sequence enters the trivial cycle for almost all integers. More recently, Tao also proved in \cite{Tao} that, for every function $f:\mathbb{Z}_{>0}\rightarrow \mathbb{R}$ with $\lim\limits_{n\rightarrow\infty} f(n)= \infty$ and almost every positive integer~$n$, the Collatz sequence starting with $n$ reaches a number less than or equal to $f(n)$:
\[\liminf_{k\rightarrow \infty} C^k(n)\leq f(n).\]
Nevertheless, the conjecture could be wrong in two possible ways:
\begin{itemize}
\item There could be an unbounded sequence $n, C(n), \ldots$ with $\limsup_{i\rightarrow \infty} C^i(n)=\infty$.
\item There could be a nontrivial cycle $n, C(n), \ldots$ with $C^i(n)=n$ for some positive integer~$i$.
\end{itemize}
This paper concerns itself with the second case. 
Specifically, we consider the properties a nontrivial cycle would need to have.
\end{remark}

Over time, different projects, \cite{LV,Roosendaal,TOS,TOS2,He,He2,Barina}, have taken various approaches to prove or disprove the Collatz conjecture by checking whether the respective sequences of increasing numbers converge to the trivial cycle. The best known bound is from Barina~\cite{Barina-www}:

\begin{definition}\label{Definition8}
We define $X_0$ as the largest number for which it is known that for all positive integers $n\leq X_0$ their Collatz sequences reach the trivial cycle. As of the date of writing this paper, we have
\[X_0=704 \cdot 2^{60}.\]
\end{definition}

Since the Collatz sequences converge to the trivial cycle for all numbers
less than or equal to $X_0$,  we can derive estimates of properties of
hypothetically existing nontrivial cycles. On the basis of the 
value of $X_0$ known at the time,  Eliahou~\cite{SE} used methods derived from
diophantine approximation and continued fractions to prove that such a
nontrivial cycle must include at least $K>10^7$ o-steps. This bound was
subsequently improved by Hercher and Puchert~\cite{He,He2,Pu} to the
current  $K>7.2\cdot 10^{10}$ .

Another property of such a nontrivial cycle was investigated by Simons
and de Weger~\cite{SW}. As $\tfrac{3n+1}{2}>n>\tfrac{n}{2}$, such a
nontrivial cycle has strictly increasing and strictly decreasing passages:
\begin{definition}
Let $n, C(n), \ldots, C^i(n)=n$ be a nontrivial cycle. (That is, $n>2$.) Let us call it an $m$-cycle if it has exactly $m$ local minima (and therefore also exactly $m$ local maxima). 
\end{definition}

Simons and de Weger~\cite{SW} proved that $m\geq 68$ for
every nontrivial cycle. Based on the rate of growth of $X_0$ in the year
2005, when \cite{SW} appeared, they estimated that bounds on $m$ like
$m\geq 77$ were unlikely to be obtained before the
year 2419. In an updated version of the paper~\cite{SW-www} in 2010,
the same authors used what was then
a new bound $X_0$ to prove that $m \geq 76$. Regarding future bounds,
estimates suggested that $m\geq 83$ could not be reached before the year
2429 if progress on $X_0$ maintained the then-current pace and no new
insights into the topic emerged. In 2018, Hercher~\cite{He,He2}---using
his newly derived bound on $X_0$---proved that $m \geq 77$.

With the current best bound on $X_0$, Simons and de Weger
\cite{SW,SW-www} suggested that with methods of their paper $m \geq 83$
could be proven.\footnote{Private e-mail conversation in September 2021.}
In this paper, we prove that $m \geq 92$.

\subsection{Outline}
In Section~\ref{section3}, we consider sums of the form $T(n_i)=\sum_{k=0}^{k_i} \tfrac{1}{C^k(n_i)}$. This is the sum of the reciprocals of the odd numbers starting with and following a local minimum $n_i$ in an $m$-cycle. In Remark~\ref{Remark_T-trvialBound}, Lemma~\ref{Lemma_T-Abschaetzung}, and Corollary~\ref{Korrolar_besserTAbschaetzung}, we give subsequently improving upper bounds on the average of these $T(n_i)$ in sets of up to three consecutive such terms. This leads to Lemma~\ref{LemmaT_Abschaetzung} and Theorem~\ref{Theorem_bessereT_Summe}, where we give bounds on sums of an arbitrary number of consecutive $T(n_i)$.

This is motivated and used in Section~\ref{Section4}. At the start of this section, some results on nontrivial cycles in Collatz sequences are listed and proved in a way similar to that in~\cite{SE,SW,He}. In particular, Theorem~\ref{Theorem11} gives bounds on the fraction $\tfrac{K+L}{K}$ in terms of $\sum_{i=1}^m T(n_i)$. Here, $K$ is the number of odd and $L$ the number of even entries in the cycle and therefore the number of o-steps and e-steps, respectively. Together with the bounds on such sums over $T(n_i)$ and further refinements, this leads to better bounds on $\tfrac{K+L}{K}$. These bounds are given in Corollary~\ref{Corollary_TSumme_ohneRestglied} and Theorem~\ref{Theorem15}. (Corollary~\ref{Computer_estimate_m_T} is a version of Corollary~\ref{Corollary_TSumme_ohneRestglied} which gives an absolute bound independently of $X_0$, where we used a computer program to derive it.) 

The proof of Theorem~\ref{Theorem15} is based on the idea that $k_i$ cannot grow at an unlimited rate: There is a constant $1<\delta<2$ such that $k_{i+j}\leq \delta^j \cdot \tfrac{\log (n_i+1)}{\log (2)}$ for all~$i, j$; see Lemma~\ref{Lemma14}. Then we split the sum $\sum_{i=1}^m T(n_i)$ into two parts. The first one, with ``small'' $n_i$, is bounded using Theorem~\ref{Theorem_bessereT_Summe} from Section~\ref{section3}, and the second one with ``large'' and ``growing'' $k_i$ (and therefore ``large'' $n_i$, too). The second partial sum is estimated in the proof of Theorem~\ref{Theorem15} itself.

The results for the bounds  on $\tfrac{K+L}{K}$ can then be used together with Lemma~\ref{Lemma16}. This lemma states how to obtain the smallest possible denominator of all fractions in a given nonempty open interval. We use this lemma in our main theorem, Theorem~\ref{Theorem17}, to show that there is no $m$-cycle with $m\leq 91$. To do so, we assume $m\leq 91$ and give a new lower bound on $K$, which then can be used in the same way to get an even better lower bound, and so on. Finally, we arrive at a proven lower bound on $K$ which is larger than the upper bound for this value of~$m$ given by Simons and de Weger~\cite{SW-www}. This result shows that no such $m$-cycle with $m\leq 91$ can exist.

In the same way, we can give new lower bounds on $K$ for $m$-cycles with $m\geq 92$. A selection of them is given in Corollary~\ref{Corollary_Computer_greater_m_estimate}. The computations for Theorem~\ref{Theorem17} and Corollary~\ref{Corollary_Computer_greater_m_estimate} were done using a {\tt Sagemath} worksheet, with the total computing time lasting a few minutes.

Finally, in Section~\ref{section5} the methods of the previous sections are used to consider the case of nontrivial Collatz cycles independent of~$m$. The result on the average value of the $T(n_i)$ yielded Lemma~\ref{Lemma_T-Abschaetzung_ohne_m} is analogous to the ones given by Lemma~\ref{Lemma_T-Abschaetzung} and Corollary~\ref{Korrolar_besserTAbschaetzung}. However there the values $T(n_i)$ are weighted by the numbers~$k_i$. This is used to get Theorem~\ref{Theorem-Abschaetzungohnem}, which yields a result analogous to that of Corollary~\ref{Corollary_TSumme_ohneRestglied}, but independent of~$m$. As stated in Remark~\ref{RemarkAbschaetzungohnem}, this lowers the value $X_0$ has to have for proving $K>1.375\cdot 10^{11}$ (which is the next threshold reachable) from $3781\cdot2^{60}$ to $2836\cdot 2^{60}$. To get this result with an even smaller known value of $X_0$, we implemented the methods in a computer program which---after extensive computations---proved Corollary~\ref{Corollary_Computer-Bound}, which states that already $X_0\geq 1536\cdot 2^{60}=3\cdot 2^{69}$ is sufficient.
 
\section{Upper bounds on $T(n_i)$} \label{section3}

\begin{definition}\label{Definition_k_i}
For a given $m$-cycle, we call the local minima in order of appearance in the cycle $n_1, n_2, \ldots, n_m$. To keep the notation simple, we further set $n_{m+1}:=n_1, n_{m+2}:=n_2$ and so on. Note that we do not need $n_1$ to be the smallest of these minima, as every cyclic permutation of these minima yields to the same results. All we need is that, for all $i$, the next local minimum after $n_i$ in this Collatz trajectory is $n_{i+1}$. 

Further let $k_i$ be the exact number of consecutive o-steps directly following the local minimum~$n_i$, and let  $\ell_i$  be the exact number of e-steps directly following them. Thus, the numbers $n_i, C(n_i), \ldots$, and $C^{k_i-1}(n_i)$ are all odd (as they are followed by an o-step) and $C^{k_i}(n_i)$ is even. Therefore, $C^{k_i}(n_i)$ is the local maximum following $n_i$. From that point on, the numbers $C^{k_i}(n_i), C^{k_i+1}(n_i), \ldots$ and $C^{k_i+\ell_i-1}(n_i)$ are all even (since they are followed by an e-step) and $C^{k_i+\ell_i}(n_i)$ is odd. Therefore, $C^{k_i+\ell_i}(n_i)=n_{i+1}$ is the next local minimum after $n_i$.

For positive integers $i$, let 
\[T(n_i):=\sum_{t=0}^{k_i-1} \frac{1}{C^t(n_i)}.\] 
\end{definition}

\begin{remark}\label{Remark_T-trvialBound}
We have $T(n_i)<\frac{3}{n_i}<\tfrac{3}{X_0}$ for all $1\leq i$.
\end{remark}

\begin{proof}
As $C(n)>\tfrac{3}{2} n$ for odd $n$, we have $C^t(n_i)\geq \left(\tfrac{3}{2}\right)^t \cdot n_i$, and therefore
\[
T(n_i)=\sum_{t=0}^{k_i-1} \frac{1}{C^t(n_i)} < \sum_{t=0}^{\infty} \frac{1}{n_i \cdot \left(\tfrac{3}{2}\right)^t} = \frac{1}{n_i} \cdot \sum_{t=0}^{\infty} \left(\frac{2}{3}\right)^t = \frac{3}{n_i} < \frac{3}{X_0}.
\] 
The last inequality follows from $n_i>X_0$.
\end{proof}

\begin{lemma}\label{Lemma12}
Let $n$ and $k$ be positive integers, where $n, C(n), \ldots, C^{k-1}(n)$ are all odd. Then $n\equiv -1 \pmod{2^k}$. In particular, $n\geq 2^k-1$.
\end{lemma}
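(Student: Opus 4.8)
The plan is to prove the congruence $n \equiv -1 \pmod{2^k}$ by induction on $k$ and then read off the lower bound $n \ge 2^k - 1$ as an immediate consequence. The underlying mechanism is that each o-step $n_t \mapsto C(n_t) = \tfrac{3n_t+1}{2}$ on an odd number pins down one more 2-adic digit of $n_t$ once the successor is known to a given 2-adic precision; running $k$ consecutive o-steps therefore fixes the last $k$ binary digits of $n$, and these must all be $1$.

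For the base case $k=1$, the only hypothesis is that $n$ itself is odd, which is exactly the statement $n \equiv 1 \equiv -1 \pmod{2}$.

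For the inductive step, I would suppose the statement holds for $k$ and assume that $n, C(n), \ldots, C^{k}(n)$ are all odd. Setting $m := C(n) = \tfrac{3n+1}{2}$, the iterates $m, C(m), \ldots, C^{k-1}(m)$ coincide with $C(n), \ldots, C^{k}(n)$ and are therefore all odd, so the induction hypothesis yields $m \equiv -1 \pmod{2^{k}}$. Writing $2m = 3n+1$ and lifting this congruence by one power of two gives $2m \equiv -2 \pmod{2^{k+1}}$, that is $3n + 1 \equiv -2 \pmod{2^{k+1}}$, hence $3n \equiv -3 \pmod{2^{k+1}}$. Since $3$ is a unit modulo every power of two, cancelling the factor $3$ produces $n \equiv -1 \pmod{2^{k+1}}$, which completes the induction.

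The single point needing care is this cancellation of the factor $3$: multiplying $2m = 3n+1$ is precisely what raises the modulus from $2^{k}$ to $2^{k+1}$, and one must note that $\gcd(3, 2^{k+1}) = 1$ so that $3$ may legitimately be divided out modulo $2^{k+1}$. This is the only step where the argument could fail, and it is harmless because $3$ is odd. Finally, since $n$ is a positive integer satisfying $n \equiv -1 \pmod{2^k}$, its smallest admissible value is $2^k - 1$, which gives $n \ge 2^k - 1$ as asserted.
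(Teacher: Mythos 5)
Your proof is correct, but it is organized differently from the paper's. The paper also inducts on $k$, but it keeps $n$ fixed and proves a \emph{stronger} statement along the way: writing $n = a\cdot 2^k - 1$, it shows $C^k(n) = a\cdot 3^k - 1$, and the induction step works by observing that for $C^{k-1}(n) = a\cdot 3^{k-1}-1$ to be odd, the integer $a$ must be even, which upgrades the congruence from modulus $2^{k-1}$ to $2^k$. You instead shift the starting point: you apply the induction hypothesis to $m = C(n)$ (whose first $k$ iterates are odd), obtaining $m \equiv -1 \pmod{2^k}$, and then pull this congruence back through the single o-step via $2m = 3n+1$, doubling the modulus and cancelling the unit $3$ modulo $2^{k+1}$. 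Both mechanisms are sound; your cancellation step is legitimate exactly as you say, since $\gcd(3,2^{k+1})=1$. What the paper's version buys is the explicit formula $C^k(n) = a\cdot 3^k - 1$, which is not needed for the lemma's statement but is cited and reused later (in the proofs of Lemma~\ref{Collatz-merger} and Lemma~\ref{Lemma14}, both of which refer to ``the proof of Lemma~\ref{Lemma12}'' for this form of the iterate). What your version buys is economy: it proves exactly the claimed congruence with no bookkeeping of the iterates' values, at the cost of not producing that auxiliary formula.
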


\begin{proof}
We proceed by induction on~$k$ to show that not only $n\equiv -1 \pmod{2^k}$, but also $C^k(n)=a \cdot 3^k-1$, if $n=a\cdot 2^k-1$. If $k=1$, then the statement is obvious: We have $n=a\cdot 2^1 -1$ for some positive integer~$a$ and $C^1(n)=a\cdot 3^1 - 1$. 

Now let $k$ be at least~2 and $n, C(n), \ldots, C^{k-2}(n)$ all odd. Then there is a positive integer~$a$ with $n=a\cdot 2^{k-1} -1$ and $C^{k-1}(n)=a\cdot 3^{k-1} -1$. Since in order for $C^{k-1}(n)$ to be odd, $a$ must be even, $a=2a^{\prime}, n=a^{\prime} \cdot 2^k-1$ and $C^k(n)=a^{\prime} \cdot 3^k-1$. Thus, $n\equiv -1\pmod{2^k}$ and $n\geq 2^k-1$.
\end{proof}

Note that this well-known statement has been used before and proven in many publications on this subject, e.g., \cite{RT}.
\begin{lemma}\label{Collatz-merger}
Let $n_i$ be an odd positive integer. Let $k_i$ be the exact number of o-steps directly following $n_i$ in its Collatz sequence and $\ell_i$ the exact number of e-steps following them.

If $\ell_i\geq 2$, then the Collatz sequences of $n_i$ and $n_i^{\prime}:=\tfrac{n_i-1}{2}$ merge since $C^{k_i+2}(n_i)=C^{k_i+1}(n_i^{\prime})$.
\end{lemma}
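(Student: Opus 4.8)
The plan is to reduce everything to explicit formulas for the iterates via Lemma~\ref{Lemma12}. Since $n_i, C(n_i), \ldots, C^{k_i-1}(n_i)$ are all odd, Lemma~\ref{Lemma12} lets me write $n_i = a\cdot 2^{k_i}-1$ with $C^{k_i}(n_i)=a\cdot 3^{k_i}-1$ for some positive integer $a$. Because $k_i$ is the \emph{exact} number of o-steps, $C^{k_i}(n_i)$ is even, which forces $a$ to be odd. Using $\ell_i\geq 2$, the local maximum $C^{k_i}(n_i)=a\cdot 3^{k_i}-1$ and its successor $C^{k_i+1}(n_i)=\tfrac{a\cdot 3^{k_i}-1}{2}$ are both even, so two e-steps give
\[ C^{k_i+2}(n_i)=\frac{a\cdot 3^{k_i}-1}{4}, \]
which is the target value I will match from the side of $n_i'$.

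Next I rewrite $n_i'=\tfrac{n_i-1}{2}=a\cdot 2^{k_i-1}-1$. For $k_i\geq 2$ the integer $n_i'$ again satisfies the hypotheses of Lemma~\ref{Lemma12} with exponent $k_i-1$ (with $a$ odd), so $C^{k_i-1}(n_i')=a\cdot 3^{k_i-1}-1$; for $k_i=1$ this identity holds trivially, since then $n_i'=a-1$. In either case $C^{k_i-1}(n_i')=a\cdot 3^{k_i-1}-1$ is even (an odd $a$ times an odd power of $3$, minus one), so the next step is an e-step giving $C^{k_i}(n_i')=\tfrac{a\cdot 3^{k_i-1}-1}{2}$.

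The crucial parity check is that this last value is \emph{odd}, so that the following step is an o-step. Here I use the hypothesis $\ell_i\geq 2$, which is equivalent to $a\cdot 3^{k_i}\equiv 1 \pmod 4$; multiplying by the inverse of $3$ modulo $4$ turns this into $a\cdot 3^{k_i-1}\equiv 3 \pmod 4$, i.e.\ $\tfrac{a\cdot 3^{k_i-1}-1}{2}$ is odd. Performing the resulting o-step then yields
\[ C^{k_i+1}(n_i')=\frac{3\cdot\tfrac{a\cdot 3^{k_i-1}-1}{2}+1}{2}=\frac{a\cdot 3^{k_i}-1}{4}=C^{k_i+2}(n_i), \]
which is exactly the claimed merging identity, and from that common value onward the two trajectories coincide since $C$ is deterministic.

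I expect the main obstacle to be the parity bookkeeping rather than the algebra: one must track that $a$ is odd, correctly translate the condition $\ell_i\geq 2$ into the congruence $a\cdot 3^{k_i}\equiv 1\pmod 4$, and handle the boundary case $k_i=1$ (where $n_i'$ is even and there are no initial o-steps) so that Lemma~\ref{Lemma12}, stated only for positive exponents, is not applied with exponent $0$. Once these points are settled, the formula comparison above closes the argument.
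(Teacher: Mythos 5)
Your proof is correct, and at its core it follows the same route as the paper's: both arguments use the representation $n_i\equiv -1\pmod{2^{k_i}}$ from Lemma~\ref{Lemma12} (and, implicitly, the converse direction established in its inductive proof) to compute both trajectories in closed form---$k_i$ o-steps then two e-steps on the $n_i$ side, versus $k_i-1$ o-steps, one e-step, and one o-step on the $n_i'$ side---and then match the resulting expressions. The difference is in the bookkeeping, and yours is genuinely tighter: the paper encodes $\ell_i\geq 2$ by splitting into cases according to the parity of $k_i$, writing $n_i=a\cdot 2^{k_i+2}+2^{k_i}-1$ or $n_i=a\cdot 2^{k_i+2}+3\cdot 2^{k_i}-1$ and carrying out the whole computation twice, whereas you keep $\ell_i\geq 2$ as the single congruence $a\cdot 3^{k_i}\equiv 1\pmod 4$ (with $a$ odd) and obtain the crucial oddness of $C^{k_i}(n_i')=\tfrac{a\cdot 3^{k_i-1}-1}{2}$ by multiplying that congruence by $3^{-1}\equiv 3\pmod 4$; this eliminates the case distinction entirely. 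You also explicitly handle the boundary case $k_i=1$ (where $n_i'$ is even and there are no initial o-steps), which the paper's write-up glosses over. One wording caveat: $n_i'$ does not literally ``satisfy the hypotheses of Lemma~\ref{Lemma12}'' with exponent $k_i-1$, since the oddness of $n_i',\ldots,C^{k_i-2}(n_i')$ is there a hypothesis, not a conclusion; what you are actually invoking is the converse direction (that $n\equiv -1\pmod{2^{k}}$ forces the first $k$ steps to be o-steps, with $C^{k}(n)=a\cdot 3^{k}-1$), which follows from the same induction. The paper relies on this converse in exactly the same implicit way, so this is a matter of phrasing rather than a gap.
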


\begin{proof}
 Let $n_i, k_i$ and $\ell_i$ be as in the lemma. Then, there is a positive integer~$a$ with $n_i=a \cdot 2^{k_i+2}+2^{k_i}-1$ if $k_i$ is even, or $n_i=a \cdot 2^{k_i+2}+3\cdot 2^{k_i}-1$ if $k_i$ is odd. To see this, we recall with Lemma~\ref{Lemma12} that $n_i$ is congruent to $-1$ modulo $2^{k_i}$, but not modulo $2^{k_i+1}$. Therefore, there is a positive integer $b$ with $n_i=b \cdot 2^{k_i+1} + 2^{k_i}-1$. With this, according to the proof
of Lemma~\ref{Lemma12} we get 
\[C^{k_i+1}(n_i)=b \cdot 3^{k_i} + \frac{3^{k_i}-1}{2}=\frac{(2b+1) \cdot 3^{k_i}-1}{2}.\]
Since we want another e-step to follow this number, it has to be even. Hence, we get $(2b+1) \cdot 3^{k_i}\equiv 1\pmod{4}$. For even $k_i$ we have $3^{k_i}\equiv 1\pmod{4}$, thus, $b=2a$ has to be even. For odd $k_i$ we have $3^{k_i}\equiv 3\pmod{4}$, such that $b=2a+1$ has to be odd. 

If $k_i$ is even, this leads to the equality $C^{k_i+2}(n_i)=a\cdot 3^{k_i} + \tfrac{3^{k_i}-1}{4}$. But this number is also a Collatz successor of the number $n_i^{\prime}:=\tfrac{n_i-1}{2}=a\cdot 2^{k_i+1} + 2^{k_i-1}-1$, since through $n_i^{\prime} \equiv 2^{k_i-1}-1 \pmod{2^{k_i}}$ we get $C^{k_i}(n_i^{\prime}) = a \cdot 2 \cdot 3^{k_i-1} + \tfrac{3^{k_i-1}-1}{2}$. But when $k_i$ is even we know that $\tfrac{3^{k_i-1}-1}{2}$ is odd and that, therefore, $C^{k_i}(n_i^{\prime})$ is odd, too. As a result, we get 
\[C^{k_i+1}(n_i^{\prime})=a\cdot 3^{k_i}+\frac{3^{k_i}-1}{4}=C^{k_i+2}(n_i).\]

Similarly, for odd $k_i$ we get 
\begin{align*}
C^{k_i+1}(n_i)&=\frac{(2(2a+1)+1) \cdot 3^{k_i}-1}{2}=\frac{4a\cdot 3^{k_i} +3 \cdot 3^{k_i} -1}{2}\\
\quad&=2a\cdot 3^{k_i} + \frac{3^{k_i+1}-1}{2}
\end{align*}
and therefore $C^{k_i+2}(n_i)=a\cdot 3^{k_i} + \tfrac{3^{k_i+1}-1}{4}$. Now this is $C^{k_i+1}(n_i^{\prime})$ with $n_i^{\prime}=\tfrac{n_i-1}{2}=a \cdot 2^{k_i+1} + 3\cdot 2^{k_i-1}-1$, too: By $n_i^{\prime}\equiv 2^{k_i-1}-1\pmod{2^{k_i}}$, we know $C^{k_i}(n_i^{\prime})=a \cdot 2 \cdot 3^{k_i-1}+\tfrac{3^{k_i-1}-1}{2}$. As $k_i$ is odd, $C^{k_i}(n_i^{\prime})$ is odd, too, and we get 
\[C^{k_i+1}(n_i^{\prime})=a\cdot 3^{k_i}+\frac{3^{k_i}-1}{4}=C^{k_i+2}(n_i).\]
\end{proof}

\begin{remark}
 From this lemma we also get a small result for unbounded Collatz sequences:
 
 Let $n_1$ be the smallest positive integer, if any, for which the Collatz sequence is unbounded. Let $k_1$ and $\ell_1$ be as above. Then $\ell_1=1$ and $k_1\geq 2$. (If $\ell_1$ were greater, then there would be a smaller number $n_1^{\prime}$ which would also have an unbounded Collatz sequence, and this would contradict the minimality of $n_1$. The same would hold, if $k_1=\ell_1=1$, since in that case we would have $C^2(n_1)=\tfrac{3n_1+1}{4}<n_1$, which also has an unbounded sequence.)
\end{remark}

\begin{lemma}\label{Lemma_T-Abschaetzung}
For all $1\leq i$, we have 
\begin{align*}
T(n_i)&< \frac{35}{18} \cdot \frac{1}{X_0} \text{ or}\\ 
T(n_i)+T(n_{i+1})&<2 \cdot \frac{35}{18} \cdot \frac{1}{X_0}.
\end{align*}  
\end{lemma}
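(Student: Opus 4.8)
The plan is to prove the dichotomy for each fixed $i$ by splitting on the number $\ell_i$ of e-steps that follow the local maximum $C^{k_i}(n_i)$, and, inside the remaining case $\ell_i=1$, on whether the first alternative already holds. Throughout I will use that every $n_i$ lies on a nontrivial cycle, so $n_i>X_0$ (Definition~\ref{Definition8}), together with the trivial reciprocal bound of Remark~\ref{Remark_T-trvialBound} and its refinement.

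First I would dispose of the case $\ell_i\geq 2$. Here Lemma~\ref{Collatz-merger} applies and shows that the Collatz trajectories of $n_i$ and of $n_i':=\tfrac{n_i-1}{2}$ merge. Since the trajectory of $n_i$ never reaches the trivial cycle, neither does that of $n_i'$, which flows into it; by the definition of $X_0$ this forces $n_i'>X_0$, hence $n_i>2X_0$. Combined with Remark~\ref{Remark_T-trvialBound} this gives $T(n_i)<\tfrac{3}{n_i}<\tfrac{3}{2X_0}=\tfrac{27}{18}\cdot\tfrac{1}{X_0}<\tfrac{35}{18}\cdot\tfrac{1}{X_0}$, so the first alternative holds.

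It then remains to treat $\ell_i=1$, where I may assume the first alternative fails, i.e.\ $T(n_i)\geq\tfrac{35}{18}\cdot\tfrac{1}{X_0}$, and aim to prove the second. Writing $u:=\left(\tfrac{2}{3}\right)^{k_i}$ and using $C^t(n_i)>\left(\tfrac32\right)^t n_i$, one refines Remark~\ref{Remark_T-trvialBound} to the sharper estimate $T(n_i)<\tfrac{3(1-u)}{n_i}<\tfrac{3(1-u)}{X_0}$. Comparing with the assumed lower bound yields $1-u>\tfrac{35}{54}$, i.e.\ $u<\tfrac{19}{54}$; since $u$ only takes the discrete values $\left(\tfrac23\right)^{k_i}$ and $\left(\tfrac23\right)^2=\tfrac{24}{54}$ is already too large, this forces $k_i\geq 3$ and hence $u\leq\tfrac{8}{27}$.

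The remaining work is to bound $T(n_{i+1})$ via the explicit ascending passage. By Lemma~\ref{Lemma12} write $n_i=a\cdot 2^{k_i}-1$; then $C^{k_i}(n_i)=a\cdot 3^{k_i}-1$ is the even local maximum, so $a$ is odd, and as $\ell_i=1$ the next minimum is $n_{i+1}=\tfrac{C^{k_i}(n_i)}{2}=\tfrac{a3^{k_i}-1}{2}$. Substituting $a\,3^{k_i}=(n_i+1)/u$ gives $n_{i+1}=\tfrac{n_i+1-u}{2u}$, so $T(n_{i+1})<\tfrac{3}{n_{i+1}}=\tfrac{6u}{n_i+1-u}\leq\tfrac{6u}{n_i}$. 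Adding this to $T(n_i)<\tfrac{3(1-u)}{n_i}$ yields
\[
T(n_i)+T(n_{i+1})<\frac{3(1-u)+6u}{n_i}=\frac{3(1+u)}{n_i}\leq\frac{3\cdot\tfrac{35}{27}}{n_i}=\frac{35}{9}\cdot\frac{1}{n_i}<2\cdot\frac{35}{18}\cdot\frac{1}{X_0},
\]
which is exactly the second alternative. The main obstacle, and what makes the constant $\tfrac{35}{18}$ come out sharp, is the interplay in this last case: one must extract $k_i\geq 3$ (equivalently $u\leq\tfrac{8}{27}$) from the failure of the first alternative, and then feed the exact value of $u$ into the estimate for $n_{i+1}$, so that the growth of $n_{i+1}$ relative to $n_i$ compensates precisely for how large $T(n_i)$ is permitted to be. Everything else reduces to the trivial reciprocal bound and the merging lemma.
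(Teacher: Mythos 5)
Your proposal is correct and takes essentially the same route as the paper's own proof: it rests on the same three ingredients, namely the geometric-series refinement $T(n_i)<\tfrac{3(1-(2/3)^{k_i})}{n_i}$, Lemma~\ref{Collatz-merger} forcing $n_i>2X_0$ when $\ell_i\geq 2$, and the growth of $n_{i+1}$ by the factor $\tfrac{1}{2}\left(\tfrac{3}{2}\right)^{k_i}$ when $\ell_i=1$ and $k_i\geq 3$, yielding the identical bound $\tfrac{3(1+u)}{n_i}\leq\tfrac{35}{9}\cdot\tfrac{1}{n_i}$. The only differences are presentational: you split first on $\ell_i$ and obtain $k_i\geq 3$ by contraposition (where the paper handles $k_i\leq 2$ directly), and you compute $n_{i+1}$ exactly via Lemma~\ref{Lemma12} where the paper uses the corresponding inequality.
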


\begin{proof}
Let $k_i$ be as in Definition~\ref{Definition_k_i}. For $k_i\leq 2$ we have 
\[T(n_i) < \frac{1}{n_i} \cdot \left( 1+\frac{2}{3} \right) =\frac{5}{3} \cdot \frac{1}{n_i} < \frac{35}{18} \cdot \frac{1}{n_i}<\frac{35}{18} \cdot \frac{1}{X_0}.\] 

Now let $k_i\geq 3$. Then 
\begin{align*}
T(n_i)&=\sum_{t=0}^{k_i-1} \frac{1}{C^t(n_i)}< \sum_{t=0}^{k_i-1} \frac{1}{n_i \cdot \left(\tfrac{3}{2}\right)^t}\\
 &= \frac{1}{n_i} \cdot \frac{1-\left(\frac{2}{3}\right)^{k_i}}{1-\frac{2}{3}}\\
 &= \frac{1}{n_i} \cdot  \left(3 - 3 \cdot \left(\frac{2}{3}\right)^{k_i} \right)
\end{align*}
and $C^{k_i}(n_i) > \left(\tfrac{3}{2}\right)^{k_i} \cdot n_i$. As an e-step follows, we get 
\[C^{k_i+1}(n_i) > \frac{1}{2} \cdot \left(\tfrac{3}{2}\right)^{k_i} \cdot n_i.\]
If $C^{k_i+1}(n_i)$ is already the next local minimum, that is $n_{i+1}=C^{k_i+1}(n_i)$, we get 
\begin{align*}
T(n_{i+1})&< \frac{3}{n_{i+1}} < 6 \cdot \left(\frac{2}{3}\right)^{k_i} \cdot \frac{1}{n_i}, \\
T(n_i)+T(n_{i+1}) &< \frac{1}{n_i} \cdot \left(3 - 3 \cdot \left(\frac{2}{3}\right)^{k_i} + 6 \cdot \left(\frac{2}{3}\right)^{k_i}  \right)\\
 &=  \frac{1}{n_i} \cdot \left(3 + 3 \cdot \left(\frac{2}{3}\right)^{k_i}  \right)\\
 &\leq \frac{1}{n_i} \cdot \frac{35}{9} < 2 \cdot \frac{35}{18} \cdot \frac{1}{X_0}.
\end{align*}
The second-to-last inequality follows from $\left(\tfrac{2}{3}\right)^{k_i}\leq \tfrac{8}{27}$ for $k_i\geq 3$.

Finally, we must consider the case when there are at least two e-steps after the $k_i$ o-steps following $n_i$. Then from Lemma~\ref{Collatz-merger} we know that  there exists an integer $n_i^{\prime}=\tfrac{n_i-1}{2}$ for which the Collatz sequence merges with that of $n_i$, and thus with the considered cycle.

But $n_i^{\prime}>X_0$ since for every positive integer less than or equal to $X_0$ the Collatz sequence enters the trivial cycle. Hence, we have $n_i>2\cdot n_i^{\prime}>2X_0$ and thus $T(n_i)<\tfrac{3}{n_i}<\tfrac{3}{2} \cdot \tfrac{1}{X_0}<\tfrac{35}{18} \cdot \tfrac{1}{X_0}$, which concludes the proof. 
\end{proof}

\begin{lemma}\label{LemmaT_Abschaetzung}
Let $0\leq m_1$ be an integer. Then
\[ \sum_{i=1}^{m_1} T(n_i)<\frac{35m_1+19}{18} \cdot \frac{1}{X_0}.\]
\end{lemma}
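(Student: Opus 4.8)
The plan is to prove the bound by strong induction on $m_1$, peeling terms off the front of the sum and invoking the dichotomy of Lemma~\ref{Lemma_T-Abschaetzung} at each step. Because both Remark~\ref{Remark_T-trvialBound} and Lemma~\ref{Lemma_T-Abschaetzung} hold for \emph{every} index~$i$, the whole setup is invariant under shifting the starting point, so the induction hypothesis may be applied freely to the shorter partial sums $\sum_{i=2}^{m_1} T(n_i)$ and $\sum_{i=3}^{m_1} T(n_i)$ that appear below.

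For the base cases, $m_1=0$ gives the empty sum $0<\frac{19}{18}\cdot\frac{1}{X_0}$, and $m_1=1$ is precisely Remark~\ref{Remark_T-trvialBound}, since $T(n_1)<\frac{3}{X_0}=\frac{35\cdot 1+19}{18}\cdot\frac{1}{X_0}$. This is where the additive slack $\frac{19}{18}\cdot\frac{1}{X_0}$ in the statement comes from: the weak trivial bound $\frac{3}{X_0}$ exceeds $\frac{35}{18}\cdot\frac{1}{X_0}$ by exactly $\frac{19}{18}\cdot\frac{1}{X_0}$. For the inductive step with $m_1\geq 2$, I would apply Lemma~\ref{Lemma_T-Abschaetzung} to the first index. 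In the first case $T(n_1)<\frac{35}{18}\cdot\frac{1}{X_0}$, I peel off this single term and bound the remaining $m_1-1$ terms by the induction hypothesis; adding $\frac{35}{18}\cdot\frac{1}{X_0}+\frac{35(m_1-1)+19}{18}\cdot\frac{1}{X_0}$ recovers exactly $\frac{35m_1+19}{18}\cdot\frac{1}{X_0}$. In the second case $T(n_1)+T(n_2)<2\cdot\frac{35}{18}\cdot\frac{1}{X_0}$, I peel off the pair and apply the hypothesis to the remaining $m_1-2$ terms, and the sum $\frac{70}{18}\cdot\frac{1}{X_0}+\frac{35(m_1-2)+19}{18}\cdot\frac{1}{X_0}$ again collapses to the target after a one-line computation.

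I do not expect any real obstacle here; the two points to watch are bookkeeping rather than substance. First, the step invokes the hypothesis at two smaller values ($m_1-1$ and $m_1-2$), so genuine strong induction with both base cases $m_1=0$ and $m_1=1$ is required. Second, the peeling is done from the \emph{front}, which guarantees that each peeled pair $\{1,2\}$ lies entirely inside the index range $\{1,\dots,m_1\}$ whenever $m_1\geq 2$; this is what avoids a boundary term protruding past $m_1$, the one genuine nuisance one would encounter if one instead tried to group greedily looking forward from the last index.
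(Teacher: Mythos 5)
Your proposal is correct and is essentially the paper's own argument in inductive form: the paper performs the same front-to-back decomposition into blocks of one or two summands via the dichotomy of Lemma~\ref{Lemma_T-Abschaetzung}, with the trivial bound $T(n_i)<\tfrac{3}{X_0}$ from Remark~\ref{Remark_T-trvialBound} absorbing a possibly unpaired final term, which is exactly the source of the slack $\tfrac{19}{18}\cdot\tfrac{1}{X_0}$ in both treatments. Your shift-invariance observation, needed to apply the induction hypothesis to sums starting at index $2$ or $3$, corresponds to the paper's closing remark that the bound holds for all sums of $m_1$ consecutive local minima.
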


\begin{proof}
We split the sum into single summands or sums of two consecutive summands, to each of which we apply the previous lemma. From Lemma~\ref{Lemma_T-Abschaetzung}, we know that $T(n_1)<\tfrac{35}{18} \cdot \tfrac{1}{X_0}$ or $T(n_1)+T(n_2) < 2\cdot \frac{35}{18} \cdot \tfrac{1}{X_0}$. In the first case, our first part consists solely of $T(n_1)$. In the second case, it is the partial sum $T(n_1)+T(n_2)$. The second partial sum is defined in a similar way, and so on. We say such a partial sum is complete if from Lemma~\ref{Lemma_T-Abschaetzung} we know that it is smaller than $\tfrac{35}{18} \cdot \tfrac{1}{X_0}$ for one summand or $2\cdot \tfrac{35}{18} \cdot \tfrac{1}{X_0}$ for two summands. Since all such partial sums until the one with $T(n_{m_1-1})$ are complete, they therefore have an average value of less than $\tfrac{35}{18} \cdot \tfrac{1}{X_0}$ per summand. Thus, if $T(n_{m_1})$ is in a complete partial sum we get $\sum_{i=1}^{m_1} T(n_i)<\frac{35}{18} \cdot m_1 \cdot \tfrac{1}{X_0}$, and if $T(n_{m_1})$ is not part of a complete partial sum, it cannot be in a partial sum together with $T(n_{m_1-1})$. Thus, we have
\begin{align*} 
\sum_{i=1}^{m_1} T(n_i)&= T(n_{m_1}) + \sum_{i=1}^{m_1-1} T(n_i)< T(n_{m_1}) + (m_1-1) \cdot \frac{35}{18} \cdot \frac{1}{X_0} \\
\quad &< \frac{3}{X_0} + (m_1-1) \cdot \frac{35}{18} \cdot \frac{1}{X_0}=\frac{35m_1+19}{18} \cdot \frac{1}{X_0},
\end{align*}
where we used the trivial bound $T(n_{m_1}) < \tfrac{3}{X_0}$ from Remark~\ref{Remark_T-trvialBound}.

Note that this bound holds for all sums with $m_1$ consecutive local minima.
\end{proof}

A more detailed analysis in the proof of Lemma~\ref{Lemma_T-Abschaetzung} yields a slightly better result:

\begin{corollary}\label{Korrolar_besserTAbschaetzung}
For all $1\leq i\leq m$ we have 
\begin{align*}
T(n_i)&<\frac{97}{54} \cdot \frac{1}{X_0},\\ 
T(n_i)+T(n_{i+1})&<2 \cdot \frac{97}{54} \cdot \frac{1}{X_0}, \\
\text{ or } T(n_i)+T(n_{i+1})+T(n_{i+2})&<3 \cdot \frac{97}{54} \cdot \frac{1}{X_0}.
\end{align*}
\end{corollary}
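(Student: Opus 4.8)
The plan is to mirror the three-way case split in the proof of Lemma~\ref{Lemma_T-Abschaetzung}, but to carry the sharper constant $\tfrac{97}{54}$ through and to invoke a third summand only in the one borderline configuration that survives. As before I would use $T(n_i)<\tfrac{3}{n_i}\bigl(1-(\tfrac23)^{k_i}\bigr)$ together with $n_i>X_0$. Two of the three cases already give the single-term bound: if $k_i\le 2$ then $T(n_i)<\tfrac53\cdot\tfrac1{n_i}<\tfrac{90}{54}\cdot\tfrac1{X_0}<\tfrac{97}{54}\cdot\tfrac1{X_0}$; and if $\ell_i\ge 2$ then Lemma~\ref{Collatz-merger} forces $n_i>2X_0$, whence $T(n_i)<\tfrac{3}{2X_0}=\tfrac{81}{54}\cdot\tfrac1{X_0}$. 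So the single-term bound can fail only when $k_i\ge 3$ and $\ell_i=1$; then $n_{i+1}=C^{k_i+1}(n_i)$ and, exactly as in Lemma~\ref{Lemma_T-Abschaetzung}, $T(n_i)+T(n_{i+1})<\tfrac1{n_i}\bigl(3+3(\tfrac23)^{k_i}\bigr)$. For $k_i\ge 4$ the bracket is at most $\tfrac{97}{27}$, so $T(n_i)+T(n_{i+1})<2\cdot\tfrac{97}{54}\cdot\tfrac1{X_0}$ and the two-term bound holds; the constant $\tfrac{97}{54}$ is chosen precisely so that $k_i=4$ is the break-even point.

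This leaves the single case $k_i=3,\ \ell_i=1$, where the bracket is $\tfrac{35}{9}$ (average $\tfrac{105}{54}$), which is too large. Here I would argue by assuming the two-term bound also fails, i.e. $T(n_i)+T(n_{i+1})\ge 2\cdot\tfrac{97}{54}\cdot\tfrac1{X_0}$; comparing with $T(n_i)+T(n_{i+1})<\tfrac{35}{9}\cdot\tfrac1{n_i}$ forces $n_i<\tfrac{105}{97}X_0$. By Lemma~\ref{Lemma12}, $k_i=3$ means $n_i=8a-1$ with $a$ odd, and the induction in the proof of Lemma~\ref{Lemma12} gives $C^3(n_i)=27a-1$; since $\ell_i=1$ we get the exact formula $n_{i+1}=C^4(n_i)=\tfrac{27a-1}{2}=\tfrac{27n_i+19}{16}$, so in particular $n_{i+1}>\tfrac{27}{16}n_i$. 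The decisive step is then to feed $n_i<\tfrac{105}{97}X_0$ into this formula to obtain $n_{i+1}<2X_0$. This in turn rules out $\ell_{i+1}\ge 2$: by Lemma~\ref{Collatz-merger}, $\ell_{i+1}\ge 2$ would force $n_{i+1}>2X_0$. Hence $\ell_{i+1}=1$ and $n_{i+2}=C^{k_{i+1}+1}(n_{i+1})$.

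With $\ell_{i+1}=1$ the third summand is controlled uniformly in $k_{i+1}$. Combining $T(n_{i+1})<\tfrac3{n_{i+1}}\bigl(1-(\tfrac23)^{k_{i+1}}\bigr)$ with $T(n_{i+2})<\tfrac3{n_{i+2}}<6(\tfrac23)^{k_{i+1}}\tfrac1{n_{i+1}}$ (using $n_{i+2}>\tfrac12(\tfrac32)^{k_{i+1}}n_{i+1}$) yields $T(n_{i+1})+T(n_{i+2})<\tfrac3{n_{i+1}}\bigl(1+(\tfrac23)^{k_{i+1}}\bigr)\le\tfrac5{n_{i+1}}$ for every $k_{i+1}\ge 1$. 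Using $n_{i+1}>\tfrac{27}{16}n_i$ this is at most $\tfrac{80}{27}\cdot\tfrac1{n_i}$, and with $T(n_i)<\tfrac{19}{9}\cdot\tfrac1{n_i}$ I would conclude $T(n_i)+T(n_{i+1})+T(n_{i+2})<\tfrac{137}{27}\cdot\tfrac1{n_i}=\tfrac{274}{54}\cdot\tfrac1{n_i}<3\cdot\tfrac{97}{54}\cdot\tfrac1{X_0}$, which is exactly the three-term bound.

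The hard part is the third summand $T(n_{i+2})$: in a cycle $n_{i+2}$ is itself a local minimum and may be as small as just above $X_0$, so the naive estimate $T(n_{i+2})<\tfrac3{X_0}$ is hopelessly weak and would overshoot the target. The whole argument hinges on the observation that the failure of the two-term bound pins $n_i$ (and hence $n_{i+1}=\tfrac{27n_i+19}{16}$) so close to $X_0$ that $n_{i+1}<2X_0$; this single inequality, via the merger Lemma~\ref{Collatz-merger}, forbids $\ell_{i+1}\ge 2$ and thereby guarantees the growth $n_{i+2}>\tfrac12(\tfrac32)^{k_{i+1}}n_{i+1}$ that keeps $T(n_{i+2})$ small. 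I expect verifying this chain of quantitative implications—rather than any single computation—to be the main obstacle, and it is where the improvement over Lemma~\ref{Lemma_T-Abschaetzung} genuinely comes from.
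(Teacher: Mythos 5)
Your proposal is correct, and it matches the paper in its opening moves: the cases $k_i\le 2$, $\ell_i\ge 2$, and ($\ell_i=1$, $k_i\ge 4$) are handled exactly as the paper does, with the same constants and the same role for Lemma~\ref{Collatz-merger}. Where you genuinely diverge is the borderline case $k_i=3$, $\ell_i=1$. The paper enumerates sub-cases directly: if $\ell_{i+1}\ge 2$, the merger argument forces $n_{i+1}>2X_0$, hence $n_i>\tfrac{32}{27}X_0$, and already the \emph{single}-term bound $T(n_i)<\tfrac{57}{32}\cdot\tfrac{1}{X_0}<\tfrac{97}{54}\cdot\tfrac{1}{X_0}$ holds; if $\ell_{i+1}=1$, it splits further into $k_{i+1}\le 2$ (two-term bound via $T(n_{i+1})<\tfrac{80}{81}\cdot\tfrac{1}{n_i}$) and $k_{i+1}\ge 3$ (three-term bound, reusing the $\tfrac{35}{9}$ estimate from Lemma~\ref{Lemma_T-Abschaetzung} to get $\tfrac{1073}{243}\cdot\tfrac{1}{n_i}$). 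You instead run the merger lemma in contrapositive: assuming the two-term bound fails pins $n_i<\tfrac{105}{97}X_0$, hence $n_{i+1}=\tfrac{27n_i+19}{16}<2X_0$ (here $X_0>765$ comfortably absorbs the additive $\tfrac{19}{16}$), which by Lemma~\ref{Collatz-merger} forbids $\ell_{i+1}\ge 2$; then the single uniform estimate $T(n_{i+1})+T(n_{i+2})<\tfrac{3}{n_{i+1}}\bigl(1+(\tfrac{2}{3})^{k_{i+1}}\bigr)\le\tfrac{5}{n_{i+1}}$ covers all $k_{i+1}\ge 1$ at once and gives $\tfrac{137}{27}\cdot\tfrac{1}{n_i}=\tfrac{274}{54}\cdot\tfrac{1}{n_i}<3\cdot\tfrac{97}{54}\cdot\tfrac{1}{X_0}$. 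All your numerical steps check out ($\tfrac{35}{9}\cdot\tfrac{27}{97}=\tfrac{105}{97}$, $\tfrac{2835}{1552}<2$, $\tfrac{57+80}{27}=\tfrac{137}{27}<\tfrac{97}{18}$), and your case decomposition is exhaustive. What your route buys is economy: no split on $k_{i+1}$, and the $\ell_{i+1}\ge 2$ configuration never needs its own bound. What the paper's route buys is directness (no ``assume failure'' step) and slightly sharper per-configuration conclusions, e.g., that the single-term bound already holds whenever $\ell_{i+1}\ge 2$.
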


\begin{proof}
As in the proof of Lemma~\ref{Lemma_T-Abschaetzung} let $k_i$ be the exact number of consecutive o-steps after $n_i$, and let $\ell_i$ be the exact number of consecutive e-steps thereafter. Then we have $C^{k_i+\ell_i}(n_i)=n_{i+1}$.

In the case of $k_i\leq 2$, we get $T(n_i) < \tfrac{5}{3} \cdot \tfrac{1}{n_i} < \tfrac{97}{54} \cdot \tfrac{1}{X_0}$, as with the proof of Lemma~\ref{Lemma_T-Abschaetzung}. Similarly, in the case that $\ell_i\geq 2$, we get $T(n_i) < \tfrac{3}{2} \cdot \tfrac{1}{X_0}<\tfrac{97}{54} \cdot \tfrac{1}{X_0}$. If $\ell_i=1$ and $k_i\geq 4$, we have 
\[T(n_i)+T(n_{i+1}) < \frac{1}{n_i} \cdot \left(3 + 3 \cdot \left(\frac{2}{3}\right)^{k_i}  \right) \leq \frac{1}{n_i} \cdot \frac{97}{27} < 2 \cdot \frac{97}{54} \cdot \frac{1}{X_0},\]
where the second-to-last inequality follows from $3 + 3 \cdot \left(\tfrac{2}{3}\right)^{k_i}\leq 3+ 3\cdot \frac{16}{81}=\frac{97}{27}$ as $k_i\geq 4$.

Thus, the only case that requires closer analysis is $k_i=3$ and $\ell_i=1$. Then there exists a nonnegative integer $a$ with $n_i=a\cdot 16 + 7$ and $n_{i+1}=27 \cdot a + 13$. If $\ell_{i+1}\geq 2$, then---as in the proof of Lemma~\ref{Lemma_T-Abschaetzung}---there exists a nonnegative integer~$n_{i+1}^{\prime}=\tfrac{n_{i+1}-1}{2}$ for which the Collatz sequence merges with the one of $n_{i+1}$ and therefore with the considered $m$-cycle. Thus, we have $n_{i+1}^{\prime}\geq X_0+1$ and subsequently $a \geq \tfrac{2X_0-10}{27}$ and $n_i \geq \tfrac{32X_0-160+189}{27} > \tfrac{32}{27} \cdot X_0$. That leads to the following
bounds:
\begin{align*}
T(n_i) &< \frac{1}{n_i} \cdot \left(1+\frac{2}{3}+\frac{4}{9}\right) =\frac{1}{n_i} \cdot \frac{19}{9}\\
\quad &< \frac{1}{\tfrac{32}{27} \cdot X_0} \cdot \frac{19}{9}= \frac{57}{32} \cdot \frac{1}{X_0} < \frac{97}{54} \cdot \frac{1}{X_0}.
\end{align*}
Hence, the only remaining case is $k_i=3$ and $\ell_i=\ell_{i+1}=1$. As $k_i=3$ and $\ell_i=1$, we know that $T(n_i)<\tfrac{19}{9} \cdot \tfrac{1}{n_i}$ and  $n_{i+1}>\tfrac{27}{16} \cdot n_i$ and that therefore $\tfrac{1}{n_{i+1}} < \tfrac{16}{27} \cdot \tfrac{1}{n_i}$. 

If $k_{i+1}\leq 2$ we have 
\begin{align*}
T(n_{i+1})&<\frac{5}{3} \cdot \frac{1}{n_{i+1}} < \frac{80}{81} \cdot \frac{1}{n_i} < \frac{1}{n_i},\\ 
T(n_i)+T(n_{i+1})&< \frac{28}{9} \cdot \frac{1}{n_i} < 2 \cdot \frac{97}{54} \cdot \frac{1}{X_0}.
\end{align*}

If $k_{i+1}\geq 3$, from the proof of Lemma~\ref{Lemma_T-Abschaetzung} we know
\begin{align*}
T(n_{i+1})+T(n_{i+2}) &< \frac{1}{n_{i+1}} \cdot \frac{35}{9} < \frac{35}{9} \cdot \frac{16}{27} \cdot \frac{1}{n_i}\\
\quad &= \frac{560}{243} \cdot \frac{1}{n_i},\\
T(n_i)+ T(n_{i+1})+T(n_{i+2}) &< \left(\frac{19}{9}+\frac{560}{243}\right) \cdot \frac{1}{n_i} =\frac{1073}{243} \cdot \frac{1}{n_i}\\
\quad &< 3 \cdot \frac{97}{54} \cdot \frac{1}{X_0}.
\end{align*}
\end{proof}

With this, we also get a similar but slightly better result than with Lemma~\ref{LemmaT_Abschaetzung}:

\begin{theorem}\label{Theorem_bessereT_Summe}
Let $0\leq m_1$ be an integer. Then
\[ \sum_{i=1}^{m_1} T(n_i)<\frac{97m_1+73}{54} \cdot \frac{1}{X_0}.\]
\end{theorem}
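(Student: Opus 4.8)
The plan is to mirror the proof of Lemma~\ref{LemmaT_Abschaetzung}, but to partition the sum using Corollary~\ref{Korrolar_besserTAbschaetzung} (blocks of up to three consecutive terms) in place of Lemma~\ref{Lemma_T-Abschaetzung} (blocks of up to two). Reading the indices $1,2,\dots,m_1$ from left to right, I would greedily carve off the shortest initial block of one, two, or three consecutive summands that Corollary~\ref{Korrolar_besserTAbschaetzung} certifies to be \emph{complete}, i.e.\ bounded by $\tfrac{97}{54}\cdot\tfrac1{X_0}$, $2\cdot\tfrac{97}{54}\cdot\tfrac1{X_0}$, or $3\cdot\tfrac{97}{54}\cdot\tfrac1{X_0}$ respectively, and then repeat starting from the next free index. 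Every complete block, whatever its size, has average value strictly less than $\tfrac{97}{54}\cdot\tfrac1{X_0}$ per summand, so if complete blocks cover the first $m_1-r$ indices, their total contribution is less than $(m_1-r)\cdot\tfrac{97}{54}\cdot\tfrac1{X_0}$.

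The key structural observation is that the greedy procedure can fail to advance only when fewer than three indices remain: whenever at least three terms are left, the certified complete block (of size at most three) ends at an index $\le m_1$ and can be taken. Hence the uncovered tail consists of at most $r\le 2$ terms, namely the last $r$ summands $T(n_{m_1-1}),\dots,T(n_{m_1})$. For $r=0$ there is nothing to add. For $r=1$ the single leftover term $T(n_{m_1})$ is controlled by the trivial bound $T(n_{m_1})<\tfrac3{X_0}=\tfrac{162}{54}\cdot\tfrac1{X_0}$ of Remark~\ref{Remark_T-trvialBound}. The delicate case is $r=2$: here neither the one- nor the two-term block at index $m_1-1$ is complete, which by the case analysis in the proof of Corollary~\ref{Korrolar_besserTAbschaetzung} forces $k_{m_1-1}=3$, $\ell_{m_1-1}=\ell_{m_1}=1$, and $k_{m_1}\ge3$. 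In exactly that situation the proof of the corollary already produced the sharper internal estimate $T(n_{m_1-1})+T(n_{m_1})+T(n_{m_1+1})<\tfrac{1073}{243}\cdot\tfrac1{n_{m_1-1}}$, so dropping the positive last term and using $n_{m_1-1}>X_0$ yields $T(n_{m_1-1})+T(n_{m_1})<\tfrac{1073}{243}\cdot\tfrac1{X_0}$.

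I expect the two-term tail to be the main obstacle: the bound coming merely from completeness of the size-three block, namely $3\cdot\tfrac{97}{54}\cdot\tfrac1{X_0}=\tfrac{291}{54}\cdot\tfrac1{X_0}$, is too weak, whereas the internal estimate $\tfrac{1073}{243}\cdot\tfrac1{X_0}$ is strong enough; recognizing that one must reach back into the corollary's proof rather than use its clean statement is the crux. It then remains only to combine the pieces and verify the arithmetic in each case. For $r=1$ one gets $\sum_{i=1}^{m_1}T(n_i)<(m_1-1)\cdot\tfrac{97}{54}\cdot\tfrac1{X_0}+\tfrac{162}{54}\cdot\tfrac1{X_0}=\tfrac{97m_1+65}{54}\cdot\tfrac1{X_0}$, and for $r=2$ one gets $\sum_{i=1}^{m_1}T(n_i)<(m_1-2)\cdot\tfrac{97}{54}\cdot\tfrac1{X_0}+\tfrac{1073}{243}\cdot\tfrac1{X_0}=\tfrac{873m_1+400}{486}\cdot\tfrac1{X_0}$. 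Since $65<73$ and $\tfrac{873m_1+400}{486}=\tfrac{97m_1}{54}+\tfrac{400}{486}<\tfrac{97m_1}{54}+\tfrac{657}{486}=\tfrac{97m_1+73}{54}$, while the case $r=0$ is immediate and the empty sum $m_1=0$ is trivial, the claimed bound $\tfrac{97m_1+73}{54}\cdot\tfrac1{X_0}$ follows in all cases.
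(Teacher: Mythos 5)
Your proposal is correct, and its skeleton---the greedy decomposition into complete blocks of one, two, or three consecutive terms via Corollary~\ref{Korrolar_besserTAbschaetzung}, followed by a case analysis on the size $r\le 2$ of the uncovered tail---is exactly the paper's; your cases $r=0$ and $r=1$ coincide with the paper's verbatim, including the use of the trivial bound from Remark~\ref{Remark_T-trvialBound}. Where you genuinely diverge is the two-term tail, and there you overestimate what is needed: the paper never reaches back into the corollary's proof, but simply applies Lemma~\ref{LemmaT_Abschaetzung} (which, as noted at the end of its proof, holds for any run of consecutive local minima) to $T(n_{m_1-1})+T(n_{m_1})$, getting the bound $\tfrac{89}{18}\cdot\tfrac1{X_0}=\tfrac{267}{54}\cdot\tfrac1{X_0}$; it is precisely this constant that produces $97(m_1-2)+267=97m_1+73$, i.e., the ``$+73$'' in the statement. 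Your alternative---using exhaustiveness of the corollary's case analysis to force $k_{m_1-1}=3$, $\ell_{m_1-1}=\ell_{m_1}=1$, $k_{m_1}\ge 3$, and then recycling the internal estimate $\tfrac{1073}{243}\cdot\tfrac1{n_{m_1-1}}$ with the positive third term dropped---is also valid (the forced-case argument and the arithmetic both check out), and it even yields the marginally stronger constant $\tfrac{873m_1+400}{486}<\tfrac{97m_1+73}{54}$. So your assertion that one \emph{must} reopen the corollary's proof is the one inaccuracy of emphasis: the cheaper black-box route through Lemma~\ref{LemmaT_Abschaetzung} suffices (and explains why the theorem is stated with $73$), whereas your route costs extra case-tracking but buys a slightly sharper, unclaimed bound.
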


\begin{proof}
As in the proof of Lemma~\ref{LemmaT_Abschaetzung}, we break the sum into parts of length one, two, or three consecutive summands in the sense from Corollary~\ref{Korrolar_besserTAbschaetzung}. With exception of the last one (in some cases), all of these partial sums are complete. So we know that these partial sums are smaller than $\tfrac{97}{54} \cdot \tfrac{1}{X_0}$ times the number of their respective summands. 

If $T(n_{m_1})$ is part of such a complete partial sum, we have 
\[ \sum_{i=1}^{m_1} T(n_i)<\frac{97m_1}{54} \cdot \frac{1}{X_0}.\]
If $T(n_{m_1})$ is not part of such a complete partial sum but $T(n_{m_1-1})$ is, we have 
\[ \sum_{i=1}^{m_1} T(n_i)<\frac{97(m_1-1)}{54} \cdot \frac{1}{X_0} + \frac{3}{X_0}=\frac{97m_1+65}{54} \cdot \frac{1}{X_0}.\]
Here, we used the trivial bound $T(n_{m_1})<\tfrac{3}{X_0}$, as noted in Remark~\ref{Remark_T-trvialBound}.

If neither $T(n_{m_1})$ nor $T(n_{m_1-1})$ are part of such a complete partial sum, we know that the last complete partial sum ended with $T(n_{m_1-2})$. With this, and by applying Lemma~\ref{LemmaT_Abschaetzung} to the sum 
 $T(n_{m_1-1})+T(n_{m_1})$, we get
\[ \sum_{i=1}^{m_1} T(n_i)<\frac{97(m_1-2)}{54} \cdot \frac{1}{X_0} + \frac{89}{18} \cdot \frac{1}{X_0}=\frac{97m_1+73}{54} \cdot \frac{1}{X_0}.\]
\end{proof}

\section{On Collatz $m$-cycles}\label{Section4}

\begin{definition}
From now on, let $\delta:=\frac{\log(3)}{\log(2)}>1$.
\end{definition}

The following theorem and its proof are similar to the work done by Eliahou in~\cite{SE}.

\begin{theorem}\label{Theorem11}
Let $K$ be the number of odd numbers in a given $m$-cycle and $L$ be the number of even numbers in it. Further let $k_i, n_i$ and $T(n_i)$  be as in Definition~\ref{Definition_k_i}. Then,
\[\delta<\frac{K+L}{K}<\delta+\frac{1}{K \cdot 3 \cdot \log(2)} \cdot \sum_{i=1}^m T(n_i).\]
\end{theorem}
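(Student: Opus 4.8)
The plan is to convert the cyclic structure into a single multiplicative identity relating powers of $2$ and $3$, and then extract both inequalities by taking logarithms.

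First I would parametrize the cycle by its odd members. List the odd numbers of the cycle in cyclic order as $a_1, a_2, \ldots, a_K$, with indices read modulo $K$ so that $a_{K+1}=a_1$. Each $a_j$ is succeeded by exactly one o-step and then some number $e_j\geq 1$ of e-steps before the next odd number $a_{j+1}$ is reached, so that
\[a_{j+1} = \frac{1}{2^{e_j}} \cdot \frac{3a_j+1}{2} = \frac{3a_j+1}{2^{e_j+1}},\]
equivalently $2^{e_j+1}\, a_{j+1} = 3a_j + 1$. Since every step of the cycle is either one of the $K$ o-steps or one of the $L$ e-steps, the exponents satisfy $\sum_{j=1}^K (e_j+1) = K + L$.

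Next I would multiply these $K$ equations together. Because $\prod_{j=1}^K a_{j+1}=\prod_{j=1}^K a_j$ by cyclicity, the odd numbers cancel on both sides, and after writing $3a_j+1 = 3a_j\bigl(1+\tfrac{1}{3a_j}\bigr)$ and dividing by $\prod_j a_j$ we obtain
\[2^{K+L} = 3^K \prod_{j=1}^K \left(1 + \frac{1}{3a_j}\right).\]
Taking logarithms and dividing by $K\log(2)$ gives the exact formula
\[\frac{K+L}{K} = \delta + \frac{1}{K\log(2)}\sum_{j=1}^K \log\left(1 + \frac{1}{3a_j}\right).\]
The lower bound is then immediate: each summand $\log\bigl(1+\tfrac{1}{3a_j}\bigr)$ is strictly positive, so the correction term is positive and $\frac{K+L}{K}>\delta$.

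For the upper bound I would apply the elementary estimate $\log(1+x)<x$ for $x>0$ to each term and then identify the resulting sum with $\sum_{i=1}^m T(n_i)$. This gives
\[\sum_{j=1}^K \log\left(1 + \frac{1}{3a_j}\right) < \frac13 \sum_{j=1}^K \frac{1}{a_j}.\]
By Definition~\ref{Definition_k_i}, the $k_i$ odd numbers $n_i, C(n_i), \ldots, C^{k_i-1}(n_i)$ following the $i$-th local minimum are exactly the odd entries of that ascending passage, and as $i$ ranges over $1,\ldots,m$ these blocks partition all $K$ odd members of the cycle (so $\sum_{i=1}^m k_i = K$). Hence $\sum_{j=1}^K \frac{1}{a_j} = \sum_{i=1}^m T(n_i)$, and substituting yields precisely the claimed upper bound $\frac{K+L}{K} < \delta + \frac{1}{3K\log(2)}\sum_{i=1}^m T(n_i)$. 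The only bookkeeping to watch is the telescoping of the product together with the exponent count $\sum_j (e_j+1)=K+L$; once these are set up correctly, everything else is a one-line estimate, so I do not anticipate a genuine obstacle.
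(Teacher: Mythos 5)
Your proposal is correct and follows essentially the same route as the paper. Both arguments rest on the same cyclicity identity---your $2^{K+L}=3^K\prod_{j=1}^K\bigl(1+\tfrac{1}{3a_j}\bigr)$ is just the paper's $2^{K+L}=\prod_{n\in\Omega_o}\bigl(3+\tfrac{1}{n}\bigr)$ rewritten---and both finish by observing that the blocks $n_i, C(n_i),\ldots,C^{k_i-1}(n_i)$ partition the odd members of the cycle, so that $\sum_{j=1}^K \tfrac{1}{a_j}=\sum_{i=1}^m T(n_i)$ and $\sum_{i=1}^m k_i=K$. The only technical divergence is in the upper bound: the paper first applies the AM--GM inequality to replace the product by $(3+\mu)^K$ with $\mu=\tfrac{1}{K}\sum_{n\in\Omega_o}\tfrac{1}{n}$, and only then uses $\log(1+x)<x$ once; you instead apply $\log(1+x)<x$ to each factor separately, which is marginally more direct and yields the identical bound, so nothing is gained or lost quantitatively. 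One small slip to correct: after an odd member $a_j$ the number of following e-steps satisfies $e_j\geq 0$, not $e_j\geq 1$, since an o-step can land on another odd number (this happens inside every ascending run with $k_i\geq 2$, so your claim as written would force all $k_i=1$); fortunately your telescoping identity and the count $\sum_{j=1}^K(e_j+1)=K+L$ only require $e_j\geq 0$, so the argument itself is unaffected.
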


\begin{proof}
Let $\Omega$ be the set of numbers in the $m$-cycle, $\Omega_o$ the subset of odd numbers and $\Omega_e$ the subset of even numbers. Then we have $\left|\Omega_o\right|=K$,  $\left|\Omega_e\right|=L$, and (due to cyclicity)
\begin{align*}
\prod_{n \in \Omega} n &= \prod_{n \in \Omega} C(n) = \prod_{n \in \Omega_e} C(n) \cdot \prod_{n \in \Omega_o} C(n) = \prod_{n \in \Omega_e} \frac{n}{2} \cdot \prod_{n \in \Omega_o} \frac{3n+1}{2}\\
\quad &= \prod_{n \in \Omega_e} \left(n \cdot \frac{1}{2}\right) \cdot \prod_{n \in \Omega_o} \left(n \cdot \frac{1}{2} \cdot \left(3+\frac{1}{n}\right)\right) \\
\quad &= 2^{-(L+K)} \cdot \prod_{n \in \Omega} n  \cdot \prod_{n \in \Omega_o} \left(3+\frac{1}{n}\right),\\
2^{K+L} &= \prod_{n \in \Omega_o} \left(3+\frac{1}{n}\right).
\end{align*}

The right-hand side is obviously larger than $\prod_{n \in \Omega_o} 3 = 3^K$; however, because of the inequality between arithmetic and geometric mean, it is also smaller than 
\begin{align*}
\prod_{n \in \Omega_o} \left(3+\frac{1}{n}\right) &= \left(\sqrt[K]{\prod_{n \in \Omega_o} \left(3+\frac{1}{n}\right)}\right)^K \leq \left(\frac{1}{K} \cdot \sum_{n \in \Omega_o} \left(3+\frac{1}{n}\right) \right)^K\\
\quad &= (3+\mu)^K, 
\end{align*}
where $\mu$ is $\mu:=\tfrac{1}{K} \cdot  \sum_{n \in \Omega_o} \tfrac{1}{n}$. Putting things together, we get
\begin{align*}
3^K & < 2^{K+L} \leq (3+\mu)^K,\\
K \cdot \log (3) &< (K+L) \cdot \log (2) \leq K \cdot \log (3+\mu)\\
\quad &=K \cdot \left(\log(3) + \log\left(1+\frac{\mu}{3}\right)\right)\\
&< K \cdot \left(\log(3) +\frac{\mu}{3} \right),\\
\delta=\frac{\log(3)}{\log(2)}&<\frac{K+L}{K}<\frac{\log(3)}{\log(2)}+\frac{\mu}{3\cdot \log(2)}.
\end{align*}

Now let us rewrite $\mu$:
\[
\mu=\frac{1}{K} \cdot  \sum_{n \in \Omega_o} \frac{1}{n}=\frac{1}{K} \cdot \sum_{i=1}^m \sum_{t=0}^{k_i-1} \frac{1}{C^t(n_i)},
\]
where $k_i$ is the exact number of consecutive o-steps after $n_i$ to the next local maximum on the cycle. Thus, we have $\sum_{i=1}^m k_i=K$, as every such step follows an odd number on the cycle. The inner sum is $T(n_i)$ as defined in the theorem. Hence, we get
\begin{align*}
\mu &= \frac{1}{K} \cdot \sum_{i=1}^m T(n_i),\\
\delta&<\frac{K+L}{K}<\delta+\frac{1}{K\cdot 3 \cdot \log(2)} \cdot  \sum_{i=1}^m T(n_i).
\end{align*}
\end{proof}

This leads to the following result:
\begin{corollary}\label{Corollary_TSumme_ohneRestglied}
With $K, L$ as in Theorem~\ref{Theorem11}, we have 
\[\delta <\frac{K+L}{K}<\delta+\frac{1}{K \cdot 3 \log(2)} \cdot \frac{97}{54} \cdot m \cdot \frac{1}{X_0}.\]
\end{corollary}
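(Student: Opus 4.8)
The plan is to obtain the corollary by substituting a bound on $\sum_{i=1}^m T(n_i)$ into the upper estimate $\frac{K+L}{K}<\delta+\frac{1}{K\cdot 3\log(2)}\sum_{i=1}^m T(n_i)$ from Theorem~\ref{Theorem11}. The only real work is to sharpen Theorem~\ref{Theorem_bessereT_Summe} in the cyclic case so as to drop the additive constant $73$, i.e.\ to prove
\[ \sum_{i=1}^m T(n_i) < \frac{97}{54}\cdot m \cdot \frac{1}{X_0}; \]
plugging this in then gives exactly $\frac{K+L}{K}<\delta+\frac{1}{K\cdot 3\log(2)}\cdot\frac{97}{54}\cdot m\cdot\frac{1}{X_0}$, while the lower bound $\delta<\frac{K+L}{K}$ is already contained in Theorem~\ref{Theorem11}.

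To prove the displayed cyclic bound I would first record that Corollary~\ref{Korrolar_besserTAbschaetzung} assigns to every index $i$ a \emph{complete block} of length $b_i\in\{1,2,3\}$, namely the one for which the corresponding inequality holds; this $b_i$ depends only on $i\bmod m$, since $T(n_i),T(n_{i+1}),T(n_{i+2})$ are $m$-periodic, and such a block contributes strictly less than $b_i\cdot\frac{97}{54}\cdot\frac{1}{X_0}$. Starting at position $1$ and always advancing by the current block length, I obtain an infinite greedy partition of the positions $1,2,3,\dots$ into consecutive, non-overlapping complete blocks with start positions $p_0=1<p_1<p_2<\cdots$.

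The key step is a pigeonhole argument on these start positions modulo $m$: among $p_0,p_1,\dots,p_m$ two are congruent, say $p_a\equiv p_b\pmod m$ with $a<b$. Then the blocks $B_a,\dots,B_{b-1}$ tile the $p_b-p_a=wm$ consecutive positions $p_a,p_a+1,\dots,p_b-1$ for some integer $w\ge 1$, and because these are $wm$ consecutive integers, each residue class modulo $m$ occurs exactly $w$ times. Summing the complete-block bounds over $B_a,\dots,B_{b-1}$ gives, on one side, a total equal to $w\sum_{i=1}^m T(n_i)$ (each $T(n_i)$ counted $w$ times), and on the other side a quantity strictly less than $\bigl(\sum_{j=a}^{b-1} b_{p_j}\bigr)\cdot\frac{97}{54}\cdot\frac{1}{X_0}=wm\cdot\frac{97}{54}\cdot\frac{1}{X_0}$. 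Dividing by $w$ yields the cyclic bound, and strictness survives because at least one complete block is present.

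The main obstacle is exactly this removal of the boundary penalty. In Theorem~\ref{Theorem_bessereT_Summe} the greedy partition of a finite run of $m_1$ minima can leave an incomplete tail of up to two terms, which is what produces the additive $73$; here the cyclic (``no boundary'') structure is what saves us. The subtlety to guard against is that the forced complete blocks need not tile the $m$-cycle exactly—for instance if every complete block has length $3$ while $3\nmid m$—so one cannot simply claim an exact tiling for some rotation. The winding number $w$ introduced by the pigeonhole argument is precisely the device that circumvents this, and I would be careful to phrase the proof through $w$ rather than through an exact tiling.
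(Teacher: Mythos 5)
Your proof is correct, but it takes a genuinely different route from the paper's. The paper reuses Theorem~\ref{Theorem_bessereT_Summe} wholesale: it sets $m_1=t\cdot m$, uses periodicity to write $t\cdot\sum_{i=1}^m T(n_i)=\sum_{i=1}^{tm}T(n_i)<\frac{97tm+73}{54}\cdot\frac{1}{X_0}$, divides by $t$, and lets $t\to\infty$ so that the boundary penalty $\frac{73}{54t}\cdot\frac{1}{X_0}$ shrinks below any $\varepsilon>0$; this yields only the non-strict bound $\sum_{i=1}^m T(n_i)\leq\frac{97}{54}\cdot m\cdot\frac{1}{X_0}$, which still suffices because the inequality inherited from Theorem~\ref{Theorem11} is strict. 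You instead bypass Theorem~\ref{Theorem_bessereT_Summe} entirely and work directly with the blocks of Corollary~\ref{Korrolar_besserTAbschaetzung} on the periodically extended sequence: the greedy tiling plus the pigeonhole on block start positions modulo $m$ produces a window of exactly $wm$ consecutive positions that is tiled by complete blocks, and dividing by the winding number $w$ gives the \emph{strict} inequality $\sum_{i=1}^m T(n_i)<\frac{97}{54}\cdot m\cdot\frac{1}{X_0}$ with no limiting process. What each approach buys: the paper's argument is shorter given that Theorem~\ref{Theorem_bessereT_Summe} has already done the bookkeeping, and the same limit trick is reused later (proof of Theorem~\ref{Theorem-Abschaetzungohnem}); yours is finitary (no $\varepsilon$, no limit), delivers strictness for free, and correctly identifies why a naive ``rotate until the blocks tile the cycle'' argument fails (complete blocks of length $3$ need not tile a cycle of length $m$ when $3\nmid m$), which the winding number handles cleanly. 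One small point of care: to make $b_i$ well defined and $m$-periodic you should fix a canonical choice (say, the smallest length in $\{1,2,3\}$ for which the inequality of Corollary~\ref{Korrolar_besserTAbschaetzung} holds), since several of the three inequalities may hold simultaneously; with that convention your argument goes through verbatim.
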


\begin{proof}
From Theorem~\ref{Theorem11} we know
\[\delta<\frac{K+L}{K}<\delta+\frac{1}{K \cdot 3\cdot \log(2)} \cdot \sum_{i=1}^m T(n_i).\]
Thus, we need to prove $\sum_{i=1}^m T(n_i)< \tfrac{97}{54} \cdot m \cdot \tfrac{1}{X_0}$. To do so, let $t$ be an arbitrarily large integer and remember that we set $n_{m+1}:=n_1, n_{m+2}:=n_2$, and so on. With $m_1:=t\cdot m$ and using Theorem~\ref{Theorem_bessereT_Summe}, we get
\begin{align*}
t\cdot \sum_{i=1}^m T(n_i) &= \sum_{i=1}^{t\cdot m} T(n_i) < \frac{97 (t\cdot m) +73}{54} \cdot \frac{1}{X_0},\\
\sum_{i=1}^m T(n_i) &< \frac{97}{54} \cdot m \cdot \frac{1}{X_0} + \frac{1}{t} \cdot \frac{73}{54} \cdot \frac{1}{X_0}.
\end{align*}
As one can choose $t$ as large as needed,  the second summand can be lessened to $\varepsilon$ for every $\varepsilon>0$. Thus, we have
\begin{align*}
\sum_{i=1}^m T(n_i) &< \frac{97}{54} \cdot m \cdot \frac{1}{X_0} + \varepsilon \text{ for every $\varepsilon>0$ and, therefore,}\\
\sum_{i=1}^m T(n_i) &\leq  \frac{97}{54} \cdot m \cdot \frac{1}{X_0}, 
\end{align*}
which proves the claim.
\end{proof}

\begin{remark}
Since $\tfrac{1}{3} \cdot \tfrac{97}{54}\approx 0.599$, this is an improvement of more than $40$\% over Corollary~5 of Simons and de Weger in~\cite{SW-www}.
\end{remark}

Using a computer program and the known results for $X_0$, we get a stronger result:
\begin{corollary}\label{Computer_estimate_m_T}
Provided that $X_0\geq 704\cdot 2^{60}$, we have
\[\delta <\frac{K+L}{K}<\delta+\frac{1}{K \cdot 3 \log(2)} \cdot m \cdot \frac{1}{704\cdot 2^{60}}.\]
\end{corollary}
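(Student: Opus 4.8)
The plan is to reduce the statement, via Theorem~\ref{Theorem11}, to the arithmetic claim that the average of the $T(n_i)$ over one cycle is at most $\tfrac{1}{704\cdot 2^{60}}$; that is,
\[\sum_{i=1}^m T(n_i) \le m\cdot \frac{1}{704\cdot 2^{60}}.\]
This is exactly Corollary~\ref{Corollary_TSumme_ohneRestglied} with the constant $\tfrac{97}{54}\approx 1.80$ replaced by $1$, and the improvement comes not from a new idea but from driving the block-by-block case analysis of Lemma~\ref{Lemma_T-Abschaetzung} and Corollary~\ref{Korrolar_besserTAbschaetzung} much further with computer assistance. A first useful observation is a monotonicity remark: all the bounds below only use $n_i>X_0$ (and the stronger $n_i>2X_0$ from the merger Lemma~\ref{Collatz-merger} when $\ell_i\ge 2$), and raising $X_0$ forces every $n_i$ larger and hence every $T(n_i)$ smaller. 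So it suffices to certify the average bound $\tfrac{1}{m}\sum_i T(n_i)\le \tfrac{1}{X_0}$ for the single value $X_0=704\cdot 2^{60}$; the case $X_0\ge 704\cdot 2^{60}$ then follows automatically.

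For the certification I would keep the bookkeeping of Definition~\ref{Definition_k_i}: each local minimum $n_i$ carries the pair $(k_i,\ell_i)$, and two ingredients control both $T(n_i)$ and the location of the next minimum---namely the upper bound
\[T(n_i) < \frac{1}{n_i}\Bigl(3 - 3\bigl(\tfrac{2}{3}\bigr)^{k_i}\Bigr)\]
together with the growth relation $n_{i+1}=C^{k_i+\ell_i}(n_i) > \tfrac{3^{k_i}}{2^{k_i+\ell_i}}\,n_i$, and the lower bounds $n_i>X_0$ in general and $n_i>2X_0$ when $\ell_i\ge 2$. The computer then enumerates finite block patterns $(k_i,\ell_i),(k_{i+1},\ell_{i+1}),\dots$ of bounded length; for each pattern it propagates the growth relation and the merger bound to get a lower bound on $n_i/X_0$, and hence an upper bound on the block sum $\sum_j T(n_{i+j})$. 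A block is declared \emph{complete} once this sum is certified to be at most its length times $\tfrac{1}{X_0}$, generalizing the ``one, two, or three consecutive summands'' dichotomy of Corollary~\ref{Korrolar_besserTAbschaetzung}.

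The step I expect to be the main obstacle is \emph{termination}: one must prove that a bounded look-ahead always suffices, i.e.\ that every run either closes off as a complete block or becomes arithmetically impossible. The mechanism is a genuine trade-off. Once a block has $k_j$ large with $\ell_j$ small, the factor $\tfrac{3^{k_j}}{2^{k_j+\ell_j}}$ pushes $n_j/X_0$ up geometrically, so the subsequent $T(n_j)$ fall below any threshold and can no longer spoil the average; conversely, any block that brings $n_j$ back down toward $X_0$ must have $\ell_j\ge 2$, which triggers the stronger bound $n_j>2X_0$ and thereby suppresses $T(n_j)$ itself. Turning this dichotomy into a proof that the program's search tree is finite and that its leaves exhaust every admissible cyclic pattern is the delicate point, and it is what one must check in order to trust the computer output.

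Finally I would assemble the complete-block bounds into a global inequality exactly as in Theorem~\ref{Theorem_bessereT_Summe}, and remove the boundary term with the cyclic device from the proof of Corollary~\ref{Corollary_TSumme_ohneRestglied}: apply the block decomposition to the $m_1=t\cdot m$ consecutive minima $n_1,\dots,n_{tm}$, divide by $t$, and let $t\to\infty$ so that the additive correction from the last (possibly incomplete) block vanishes. This yields $\sum_{i=1}^m T(n_i)\le \tfrac{m}{X_0}\le \tfrac{m}{704\cdot 2^{60}}$, which combined with Theorem~\ref{Theorem11} gives the stated inequality.
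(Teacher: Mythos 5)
Your outer reduction is the paper's own: Theorem~\ref{Theorem11} reduces everything to the bound $\sum_{i=1}^m T(n_i)\le m/(704\cdot 2^{60})$, and passing to $m_1=t\cdot m$ consecutive minima and letting $t\to\infty$ disposes of the incomplete final block exactly as in Corollary~\ref{Corollary_TSumme_ohneRestglied}. The gap is in the certification engine. Every ingredient you allow the computer---the bound $T(n_i)<\bigl(3-3(2/3)^{k_i}\bigr)/n_i$, the growth relation $n_{i+1}>\tfrac{3^{k_i}}{2^{k_i+\ell_i}}\,n_i$, and the floors $n_i>X_0$ in general and $n_i>2X_0$ when $\ell_i\ge 2$---is homogeneous in $X_0$: every bound derivable from them has the form $c/X_0$ with $c$ a rational depending only on the pattern of the $(k_j,\ell_j)$. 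Such a calculus literally cannot exploit the numerical value $704\cdot 2^{60}$ (if it proved the corollary, it would prove the same constant with $X_0$ as a free parameter), and it has provably non-terminating branches. Concretely, take the block pattern $(2,1)^{10},(6,9)$ repeated periodically; it satisfies all of your constraints. Normalizing $X_0=1$: the ten $(2,1)$-blocks contribute at most $\tfrac{5}{3}\sum_{j=0}^{9}(8/9)^{j}=15\bigl(1-(8/9)^{10}\bigr)\approx 10.38$, already exceeding $10$; the $(6,9)$-block starts at $n>(9/8)^{10}\approx 3.25$ (your merger floor $n>2$ is weaker here, and propagating it backwards gives only $2\,(8/9)^{j}$, which is always below the forward bound $(9/8)^{10-j}$), so it contributes at most $\approx 2.74/3.25\approx 0.84$, for a period total of $\approx 11.22>11$; and since $3^{6}/2^{15}\approx 0.022$, the propagated bound collapses and the next period starts again from the a~priori floor $n>X_0$. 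Each period thus adds about $11.22$ to the certified sum but only $11$ to the number of summands, the excess grows linearly, and no prefix of any length is ever complete. In particular, the trade-off you offer as the termination mechanism is quantitatively hopeless: the factor $2$ from Lemma~\ref{Collatz-merger} cannot offset the deficit a run of $(2,1)$-blocks accumulates, and deeper look-ahead does not help.

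This is exactly why the paper's program tracks something your proposal never uses: exact congruence information. Fixing the types of $N$ consecutive steps determines $n_i$ modulo $2^{N}$ (the refined form of Lemma~\ref{Lemma12}), and the paper's program computes the smallest positive integer $r$ in that residue class and invokes the absolute bound $n_i\ge r$, closing a case as soon as $r$ is large compared with $704\cdot 2^{60}$. That bound is not homogeneous in $X_0$; it is the single place where the actual value of $X_0$ enters, which is why the corollary carries a numerical hypothesis at all, and why using the exact value of $X_0$ reduces (rather than increases) the case analysis. It is also what prunes branches like the one above: whether any integer in the vicinity of $X_0$ actually realizes a prescribed pattern is an arithmetic question invisible to your inequalities, and for such patterns the computed $r$ turns out to be enormous after a small number of blocks---the paper reports that a look-ahead of at most $60$ summands, with residue tracking, always sufficed. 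So the proposal cannot be repaired by running the search longer or sharpening the scale-invariant inequalities; it needs the residue-class ingredient, which is the real content of the paper's computer proof.
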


\begin{proof}
In Lemma~\ref{Lemma_T-Abschaetzung} and Corollary~\ref{Korrolar_besserTAbschaetzung}, we proved that the average summand in a complete partial sum $\sum_i T(n_i)$ is smaller than a constant times $\frac{1}{X_0}$. We obtained the best value for this constant in Corollary~\ref{Korrolar_besserTAbschaetzung} with $\frac{97}{54}$. To achieve this result, we had to consider partial sums of up to three consecutive summands in this sum.

If we want to get even lower bounds for this constant, we have two possible options: First, we can increase the number of considered consecutive summands $T(n_i)$ in a partial sum. But this increases also the amount of work necessary in the case-by-case analysis. A second possibility is to use the exact known value of $X_0$ instead of using it as a parameter. This significantly reduces the number of cases to be considered.

Providing that $X_0>704\cdot 2^{60}$ we want to prove that the average $T(n_i)$ is smaller than $\frac{1}{704\cdot 2^{60}}$. To do so, we apply the same methods as above to get upper bounds of the form $c \cdot \tfrac{1}{n_i}$ for sums of $T(n_i)$ for consecutive local minima $n_i$. But if~$r$ is the smallest positive integer with $r\equiv n_i \pmod{2^k}$ for some value of $k$ (the number of, for this case, ``fixed'' steps in the Collatz sequence after $n_i$), we clearly have $n_i\geq r$. If $r$ is large enough, the calculated bound falls below the value of $\tfrac{1}{704\cdot 2^{60}}$. Then we can mark this case as proven.

Since this kind of proof gets very lengthy, we automated this process
by writing a small program in {\tt C++}. It considers partial sums with up to
60 consecutive summands $T(n_i)$.

Having used the program to prove this property, we can proceed as we did in the proof of Corollary~\ref{Corollary_TSumme_ohneRestglied} and obtain the desired result.
\end{proof}

The following lemma also can be found in \cite{SW-www}.

\begin{lemma}\label{Lemma14}
Let $n_i, n_{i+1}$ be two successive local minima in an $m$-cycle. Then we have $n_{i+1} < n_i^{\delta}$.
\end{lemma}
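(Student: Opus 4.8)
The plan is to track exactly how the value evolves across the $k_i$ o-steps and the subsequent $\ell_i$ e-steps that carry $n_i$ to its successor $n_{i+1}$, and then to bound the resulting expression by $n_i^\delta$ by exploiting the identity $\tfrac32 = 2^{\delta-1}$.

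First I would invoke Lemma~\ref{Lemma12}: since $n_i, C(n_i), \ldots, C^{k_i-1}(n_i)$ are all odd, there is a positive integer $a$ with $n_i = a\cdot 2^{k_i}-1$ and $C^{k_i}(n_i) = a\cdot 3^{k_i}-1$. Writing $a = (n_i+1)/2^{k_i}$ expresses the local maximum explicitly as $C^{k_i}(n_i) = (n_i+1)\left(\tfrac32\right)^{k_i}-1$. The ensuing $\ell_i$ e-steps merely halve this even number, so that $n_{i+1} = C^{k_i+\ell_i}(n_i) = C^{k_i}(n_i)/2^{\ell_i}$.

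Next, using $\ell_i\geq 1$ (there is at least one e-step before the next minimum) and discarding the $-1$, I obtain the strict bound $n_{i+1} < \tfrac12\left(\tfrac32\right)^{k_i}(n_i+1)$. I would then convert the power of $\tfrac32$ into a power of $n_i+1$: Lemma~\ref{Lemma12} also gives $2^{k_i}\leq n_i+1$, and since $2^{\delta-1}=\tfrac32$ we have $\left(\tfrac32\right)^{k_i} = 2^{(\delta-1)k_i} = \bigl(2^{k_i}\bigr)^{\delta-1}\leq (n_i+1)^{\delta-1}$, the last inequality because $\delta-1>0$. Substituting yields $n_{i+1} < \tfrac12(n_i+1)^{\delta}$.

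It remains only to absorb the shift from $n_i+1$ to $n_i$ and to kill the factor $\tfrac12$, which reduces to verifying $\bigl(1+\tfrac1{n_i}\bigr)^{\delta}\leq 2$; granting this, $\tfrac12(n_i+1)^\delta\leq n_i^\delta$ and hence $n_{i+1}<n_i^\delta$. I expect this last step to be the only one requiring a word of care, since the inequality genuinely fails for $n_i=1$ (there $2^\delta=3>2$). However, in a nontrivial cycle $n_i>X_0=704\cdot 2^{60}$ is astronomically large, so $\bigl(1+\tfrac1{n_i}\bigr)^{\delta}\leq 2$ holds with an enormous margin (indeed it already holds for every $n_i\geq 2$), and the claim follows.
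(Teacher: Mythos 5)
Your proposal is correct and follows essentially the same route as the paper's own proof: both invoke Lemma~\ref{Lemma12} to write $n_i=a\cdot 2^{k_i}-1$ and $C^{k_i}(n_i)=a\cdot 3^{k_i}-1$, bound $n_{i+1}$ by half the local maximum, convert $\left(\tfrac32\right)^{k_i}=\bigl(2^{k_i}\bigr)^{\delta-1}\leq (n_i+1)^{\delta-1}$ to get $n_{i+1}<\tfrac12 (n_i+1)^{\delta}$, and absorb the constant using the fact that cycle elements are large. Your closing observation that $\bigl(1+\tfrac1{n_i}\bigr)^{\delta}\leq 2$ already holds for $n_i\geq 2$ is a slightly sharper remark than the paper's bound via $(1+\tfrac1{n_i})^2<2$ for $n_i>3$, but the argument is the same.
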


\begin{proof}
Let $k_i$ be the exact number of consecutive o-steps after $n_i$ until the next local maximum in the cycle. By Lemma~\ref{Lemma12} and its proof, we know that there is a positive integer $a$ with $n_i=a\cdot 2^{k_i}-1$ and $C^{k_i}(n_i)=a\cdot 3^{k_i}-1$. Since $k_i$ is the exact number of consecutive such steps after $n_i$, we know that $C^{k_i}(n_i)$ has to be even. Therefore, we know that $n_{i+1}$, the next odd number, fulfills the inequality $n_{i+1}\leq \frac{1}{2} \cdot C^{k_i}(n_i)$.  Thus, we get
\begin{align*}
n_{i+1}&\leq  \frac{1}{2} \cdot C^{k_i}(n_i) = \frac{1}{2} \cdot (a\cdot 3^{k_i}-1) < \frac{3^{k_i}}{2} \cdot a= \frac{3^{k_i}}{2} \cdot \frac{n_i+1}{2^{k_i}}\\
\quad &= \left(2^{k_i}\right)^{\delta-1} \cdot \frac{1}{2} \cdot (n_i+1) \leq \left(a \cdot 2^{k_i}\right)^{\delta-1} \cdot \frac{1}{2} \cdot (n_i+1) = \frac{1}{2} \cdot (n_i+1)^{\delta} \\
\quad &= \frac{1}{2} \cdot \left(\frac{n+1}{n}\right)^{\delta} \cdot n_i^{\delta}<\frac{1}{2} \cdot \left(1+\frac{1}{n}\right)^2 \cdot n_i^{\delta}<n_i^{\delta}, \text{ as $n_i>X_0>3$.}
\end{align*}

Note that the statement is also true for $i=m$ and $n_{m+1}:=n_1$.
\end{proof}

\begin{theorem}\label{Theorem15}
Let $K$ be the number of odd numbers in a given $m$-cycle, and let $L$ be the number of even numbers in it.  Further, assume there exists a positive integer $m_2$  with 
\[m_2\leq m \text{ and } \frac{\delta^{m_2}-1}{\delta-1} \cdot \frac{\log\left(\frac{162}{97} \cdot X_0\right)}{\log(2)} \leq \frac{m_2}{m} \cdot K.\]
Let $v=\tfrac{m_2}{m} \cdot K \cdot \tfrac{\delta-1}{\delta^{m_2}-1}$. Then we get 
\[\delta <\frac{K+L}{K}<\delta+\frac{1}{K \cdot 3 \log(2)} \cdot \left( \frac{97(m-m_2)+73}{54 \cdot X_0}+\frac{3}{2^v-1}+ \frac{3 \cdot (m_2-1)}{(2^v-1)^{\delta}} \right).\]
If $m_2=m-1$ we get the better estimate
\[\delta <\frac{K+L}{K}<\delta+\frac{1}{K \cdot 3 \log(2)} \cdot \left( \frac{3}{X_0}+\frac{3}{2^v-1}+ \frac{3 \cdot (m-2)}{(2^v-1)^{\delta}} \right)\]
and for $m_2=m$
\[\delta <\frac{K+L}{K}<\delta+\frac{1}{K \cdot 3 \log(2)} \cdot \left( \frac{3}{2^v-1}+ \frac{3 \cdot (m-1)}{(2^v-1)^{\delta}} \right).\]
\end{theorem}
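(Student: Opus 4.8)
The plan is to push everything through Theorem~\ref{Theorem11}, so that it suffices to produce the upper bound
\[\sum_{i=1}^{m}T(n_i)<\frac{97(m-m_2)+73}{54\cdot X_0}+\frac{3}{2^v-1}+\frac{3\cdot(m_2-1)}{(2^v-1)^{\delta}}\]
and substitute it into the right-hand side of Theorem~\ref{Theorem11}. First I would record the growth estimate obtained by iterating Lemma~\ref{Lemma14}: from $n_{i+1}<n_i^{\delta}$ one gets $n_{i+t}<n_i^{\delta^{t}}$, and combining with $n_{i+t}\ge 2^{k_{i+t}}-1$ from Lemma~\ref{Lemma12} gives $k_{i+t}\le \delta^{t}\cdot\frac{\log(n_i+1)}{\log 2}$. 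Summing over a run of consecutive minima yields, for every starting index $j$ and every length $\ell$,
\[\sum_{t=0}^{\ell-1}k_{j+t}\le \frac{\log(n_j+1)}{\log 2}\cdot\frac{\delta^{\ell}-1}{\delta-1},\]
and the same inequality applied to a tail of the run bounds $\log(n_{j+t}+1)$ from below in terms of the remaining $k$'s.

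Next I would locate the ``large'' block. Averaging the $k$-sums over the $m$ cyclic windows of length $m_2$ (each $k_i$ is counted $m_2$ times, for a total of $m_2K$), some window $n_j,\dots,n_{j+m_2-1}$ satisfies $\sum_{t=0}^{m_2-1}k_{j+t}\ge \tfrac{m_2}{m}K$. Feeding this into the summed growth estimate with $\ell=m_2$ forces $\frac{\log(n_j+1)}{\log 2}\ge \tfrac{m_2}{m}K\cdot\frac{\delta-1}{\delta^{m_2}-1}=v$, i.e.\ $n_j\ge 2^{v}-1$. The hypothesis on $m_2$ is exactly $2^{v}\ge\frac{162}{97}X_0$, equivalently $\frac{3}{2^{v}}\le\frac{97}{54}\cdot\frac{1}{X_0}$, and I would use it to guarantee that $n_j\ge 2^{v}-1>X_0$ and that treating the block via the size bound is genuinely an improvement over the average bound. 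Applying the tail version of the growth estimate to $n_{j+t},\dots,n_{j+m_2-1}$ together with $k_j\le\log_2(n_j+1)$ then forces the later minima to grow: in the tight case $n_j=2^{v}-1$ this gives $k_{j+t}=\delta^{t}v$, hence $n_{j+t}\ge (2^{v})^{\delta^{t}}-1\ge (2^{v}-1)^{\delta}$ for every $t\ge 1$.

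With these lower bounds and $T(n)<\tfrac{3}{n}$ (Remark~\ref{Remark_T-trvialBound}), the block contributes at most $\frac{3}{2^{v}-1}+\frac{3(m_2-1)}{(2^{v}-1)^{\delta}}$, while the complementary arc of $m-m_2$ consecutive minima is bounded by Theorem~\ref{Theorem_bessereT_Summe} by $\frac{97(m-m_2)+73}{54X_0}$; adding these and inserting into Theorem~\ref{Theorem11} gives the main inequality. For $m_2=m-1$ the arc is a single term, for which the trivial bound $T<\tfrac{3}{X_0}$ is sharper than Theorem~\ref{Theorem_bessereT_Summe}, and for $m_2=m$ there is no arc and the window is the whole cycle (so no pigeonhole is needed and $n_1$ is simply the global minimum); these give the two stated refinements. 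The hard part will be the block estimate when $n_j$ is strictly larger than $2^{v}-1$: a large $k_j$ can then absorb the window's $k$-budget, so the growth estimate no longer forces each $n_{j+t}\ge(2^{v}-1)^{\delta}$ term by term, and one must also rule out a large first minimum followed by small ones. I expect to close this by choosing the block carefully (so that its first minimum is smallest, e.g.\ anchored at a record-low minimum) together with a convexity/monotonicity argument showing that, among all configurations consistent with $\sum_{t}k_{j+t}\ge\tfrac{m_2}{m}K$ and the growth constraints, the block's $T$-sum is maximized precisely at the tight configuration $n_j=2^{v}-1$, $n_{j+t}=(2^{v})^{\delta^{t}}$, making that case the worst one and the displayed bound valid in general.
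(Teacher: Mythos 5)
Your proposal follows the paper's proof almost line for line: the reduction through Theorem~\ref{Theorem11}, the pigeonhole selection of $m_2$ consecutive minima whose $k$-sum is at least $\tfrac{m_2}{m}K$, the cascade $k_{j+t}\le\delta^{t}\log_2(n_j+1)$ from Lemmas~\ref{Lemma12} and~\ref{Lemma14} (writing $j$ for the first index of the chosen window), hence $n_j\ge 2^v-1$ with $2^v\ge\tfrac{162}{97}X_0$, then Theorem~\ref{Theorem_bessereT_Summe} on the complementary arc and the sharpened last terms for $m_2=m-1$ and $m_2=m$. The one step you leave open, namely $T(n_{j+t})<3/(2^v-1)^{\delta}$ for $t\ge1$, is a genuine gap, and the repair you sketch cannot close it. Consider the configuration in which $k_j$ alone carries the whole window budget (so $n_j\ge 2^{k_j}-1$ is enormous) while $n_{j+1},\dots,n_{j+m_2-1}$ sit just above $X_0$ with $k_{j+t}=1$: it satisfies every condition you permit yourself --- window $k$-sum at least $\tfrac{m_2}{m}K$, all within-window constraints from Lemmas~\ref{Lemma12} and~\ref{Lemma14}, and $n_i>X_0$ --- yet its window $T$-sum is roughly $(m_2-1)/X_0$, which already for $m_2\ge 4$ exceeds the claimed window bound $\tfrac{3}{2^v-1}+\tfrac{3(m_2-1)}{(2^v-1)^{\delta}}$, itself at most about $\tfrac{97}{54}\cdot\tfrac{1}{X_0}$ plus negligible terms. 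So the maximum of the window $T$-sum over your constraint set is \emph{not} attained at the tight geometric configuration, and no convexity argument over that set can succeed. (Your idea does work for $m_2=m$, because there the growth constraints wrap around the entire cycle and make such ``floor'' configurations infeasible.) For $m_2<m$ any repair must import information from outside the window --- for instance, Lemma~\ref{Lemma14} applied backwards around the cycle forces a minimum of size about $2^{m_2K/m}$ to be preceded by more than $m_2$ successively large ``ramp'' minima, whose $T$-values are negligible and which therefore cannot simultaneously be cheap near-$X_0$ minima --- and note that anchoring the window at a record-low minimum, as you suggest, conflicts with the pigeonhole, since that window need not have above-average $k$-sum.

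You should also know that the step you stalled on is precisely the step the paper itself does not justify correctly. The paper disposes of it in one line, asserting that $T(n_{j+\ell})<3/(2^v-1)^{\delta}$ follows ``since $n_{j+\ell}<n_j^{(\delta^{\ell})}\le n_j^{\delta}$''; but that chain is an \emph{upper} bound on $n_{j+\ell}$ (and is even written in the wrong direction, since $\delta^{\ell}\ge\delta$ for $\ell\ge 1$), whereas bounding $T(n_{j+\ell})<3/n_{j+\ell}$ from above requires a \emph{lower} bound on $n_{j+\ell}$, and Lemma~\ref{Lemma14} only ever bounds successive minima from above. So your instinct that this is ``the hard part'' is sound: your attempt is genuinely incomplete, but it is incomplete at exactly the point where the paper's own argument, as written, is a non sequitur, and filling the hole requires an idea that appears in neither text.
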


\begin{proof}
First, let $m_2$ be as given in this theorem. If the only nonnegative integer fulfilling the given inequality is $m_2=0$, Corollary~\ref{Corollary_TSumme_ohneRestglied} and Corollary~\ref{Computer_estimate_m_T} give good upper bounds for $\tfrac{K+L}{K}$. Now let the inequality be true for some integer $m_2>0$.

From Theorem~\ref{Theorem11}, we know
\[\delta<\frac{K+L}{K}<\delta+\frac{1}{K \cdot 3\cdot \log(2)} \cdot \sum_{i=1}^m T(n_i).\]
Therefore, we have to give upper bounds on $\sum_{i=1}^m T(n_i)$ in the different cases. To do so, we use Theorem~\ref{Theorem_bessereT_Summe} and Lemma~\ref{Lemma14}:

There are $m_2$ consecutive local minima which are directly followed by at least $\frac{m_2}{m} \cdot K$ o-steps in total. (If this would not be the case, all of the sums $k_1+\cdots+k_{m_2}, k_2+\cdots+k_{m_2+1}, \ldots$ and $k_{m}+k_1+\cdots+k_{m_2-1}$ would be smaller than $\tfrac{m_2}{m} \cdot K$. But adding these $m$~sums together gives $m_2 \cdot K$, as every $k_i$ occurs in exactly $m_2$ such sums and $\sum_{i=1}^{m} k_i=K$, which contradicts the assumption.) W.l.o.g., let these $m_2$ consecutive local minima be $n_{m-m_2+1}, n_{m-m_2+2}, \ldots, n_m$.

From Lemma~\ref{Lemma12} we know that $k_i\leq \tfrac{\log(n_i+1)}{\log(2)}$. Now Lemma~\ref{Lemma14} gives $n_{i+1}<n_i^{\delta}$ and, therefore, $n_{i+1}+1<n_i^{\delta}+1<(n_i+1)^{\delta}$ which leads to the 
inequality $k_{i+1}<\delta \cdot \tfrac{\log(n_i+1)}{\log(2)}$. Subsequently, in an analogous way we get $k_{i+\ell} < \delta^{\ell} \cdot \tfrac{\log(n_i+1)}{\log(2)}$.

Now we can sum up these estimates for $i=m-m_2+1, \ldots, m$:
\begin{align*}
\frac{m_2}{m} \cdot K &\leq  \sum_{i=m-m_2+1}^{m} k_i = \sum_{\ell=0}^{m_2-1} k_{(m-m_2+1)+\ell}\\
\quad &\leq \frac{\log(n_{m-m_2+1}+1)}{\log(2)} \cdot \sum_{\ell=0}^{m_2-1} \delta^{\ell}\\
\quad &= \frac{\log(n_{m-m_2+1}+1)}{\log(2)} \cdot \frac{\delta^{m_2}-1}{\delta-1}.
\end{align*}
From the premise on $m_2$, we know that this is larger than 
\begin{align*}
\quad &\phantom{=} \frac{\delta^{m_2}-1}{\delta-1} \cdot \frac{\log\left(\frac{162}{97} \cdot X_0\right)}{\log(2)}.
\end{align*}
Thus, we get
\[n_{m-m_2+1}+1 > \frac{162}{97} \cdot X_0 \text{ and, therefore, } n_{m-m_2+1}\geq \frac{162}{97} \cdot X_0.\]

Now with Remark~\ref{Remark_T-trvialBound} we conclude $T(n_{m-m_2+1})<\tfrac{3}{n_{m-m_2+1}}<\tfrac{97}{54}\cdot \tfrac{1}{X_0}$. Thus, this estimate on $T(n_{m-m_2+1})$ is at least as good as the one we get from Theorem~\ref{Theorem_bessereT_Summe}. But we can get a better one:

From the definition of~$v$, we get  $v\cdot \tfrac{\delta^{m_2}-1}{\delta-1}=\tfrac{m_2}{m} \cdot K$. Thus, we have 
\[
v \leq \frac{\log(n_{m-m_2+1}+1)}{\log(2)} \text{ or, equivalently, } 2^v-1 \leq n_{m-m_2+1}.
\]
Now this gives $T(n_{m-m_2+1})<\tfrac{3}{2^v-1}$ and, since 
\[n_{m-m_2+1+\ell}<n_{m-m_2+1}^{(\delta^{\ell})}\leq n_{m-m_2+1}^{\delta},\] 
we get the inequality $T(n_{m-m_2+1+\ell})<\frac{3}{(2^v-1)^{\delta}}$.
Using this together with Theorem~\ref{Theorem_bessereT_Summe} and $m_1:=m-m_2$, we get
\begin{align*}
\sum_{i=1}^m T(n_i) &= \sum_{i=1}^{m-m_2} T(n_i) + T(n_{m-m_2+1}) + \sum_{i=m-m_2+1+1}^m T(n_i)\\
\quad &< \frac{97(m-m_2)+1}{54\cdot X_0} + \frac{3}{2^v-1} + \frac{3 \cdot (m_2-1)}{(2^v-1)^{\delta}},
\end{align*}
which proves the general statement for $1\leq m_2\leq m$. For $m_2=m-1$, the first sum $\sum_{i=1}^{m-m_2} T(n_i)=T(n_1)$ can be better bounded above by $\tfrac{3}{X_0}$, as seen in Remark~\ref{Remark_T-trvialBound}. For $m_2=m$, this first sum is empty and therefore zero.
\end{proof}

We now give a well known lemma for diophantine approximation:

\begin{lemma}\label{Lemma16}
Let $0<\alpha<\beta$ be two real numbers with continued fraction expansions $\alpha=[a_0;a_1,\ldots,a_{k-1},a_k,\ldots]$ and $\beta=[a_0;a_1,\ldots,a_{k-1},b_k,\ldots]$.
Then every fraction in the open interval $(\alpha,\beta)$ has a denominator which is not smaller than the one of $\gamma=[a_0;a_1,\ldots,a_{k-1},c_k]$ with $c_k=\min(a_k,b_k)+1$.
\end{lemma}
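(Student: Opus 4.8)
The plan is to collapse the whole problem onto a single Möbius transformation built from the shared initial part of the two continued fractions, and then to minimize a linear form in two nonnegative integers. Write $p_j/q_j$ for the convergents of $[a_0;a_1,\ldots,a_{k-1}]$, with the usual recurrences $p_{-1}=1,q_{-1}=0,p_0=a_0,q_0=1$ and $p_j=a_jp_{j-1}+p_{j-2}$, $q_j=a_jq_{j-1}+q_{j-2}$, so that $p_{k-1}q_{k-2}-p_{k-2}q_{k-1}=(-1)^k$. Every real number whose continued fraction begins with $a_0,\ldots,a_{k-1}$ can be written as $f(t)=\frac{p_{k-1}t+p_{k-2}}{q_{k-1}t+q_{k-2}}$, where $t\geq1$ is its tail. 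Since both $\alpha$ and $\beta$ are of this form, with tails $\tau_\alpha\in[a_k,a_k+1)$ and $\tau_\beta\in[b_k,b_k+1)$, and the set of such numbers is an interval, every rational in $(\alpha,\beta)$ equals $f(t)$ for some tail $t\geq1$.

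First I would record the monotonicity of $f$: differentiation gives $f'(t)=\frac{(-1)^k}{(q_{k-1}t+q_{k-2})^2}$, so $f$ is increasing exactly when $k$ is even. Comparing $\alpha<\beta$ with the locations of $\tau_\alpha,\tau_\beta$ then forces $\min(a_k,b_k)$ to be the partial quotient belonging to the left endpoint of the tail interval $J:=f^{-1}\!\big((\alpha,\beta)\big)$, and one checks that in both parities $c_k=\min(a_k,b_k)+1$ is precisely the least integer lying above the left endpoint of $J$, which itself lies in $[c_k-1,c_k)$.

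The crucial step is the denominator bookkeeping. Because the matrix $\left(\begin{smallmatrix}p_{k-1}&p_{k-2}\\ q_{k-1}&q_{k-2}\end{smallmatrix}\right)$ is unimodular, it is a bijection of $\mathbb{Z}^2$ and hence preserves the gcd of coordinate vectors. Thus if a rational tail $t=a/b\geq1$ is written in lowest terms, the number $f(t)=\frac{p_{k-1}a+p_{k-2}b}{q_{k-1}a+q_{k-2}b}$ is already in lowest terms, so its denominator is exactly $q_{k-1}a+q_{k-2}b$. As $q_{k-1}\geq q_{k-2}\geq0$ with $q_{k-1}\geq1$, minimizing the denominator of a rational in $(\alpha,\beta)$ amounts to minimizing this linear form over admissible tails. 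I would then split into two cases: for an integer tail ($b=1$) the constraint $t>c_k-1$ forces $t\geq c_k$, giving denominator $\geq q_{k-1}c_k+q_{k-2}$; for $b\geq2$ the bound $a\geq(c_k-1)b+1$ (from $a/b>c_k-1$) yields $q_{k-1}a+q_{k-2}b\geq q_{k-1}c_k+q_{k-2}+\big(q_{k-1}(c_k-1)+q_{k-2}\big)\geq q_{k-1}c_k+q_{k-2}$, using $c_k\geq2$. In either case the denominator is at least $q_{k-1}c_k+q_{k-2}$, which is exactly the denominator of $\gamma=f(c_k)$, proving the claim.

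The main obstacle I anticipate is not the minimization but the careful handling of parities and endpoints: one must pin down on which side of $J$ the integer $c_k$ sits for both $k$ even and $k$ odd, and must cope with rational endpoints, where $\gamma$ could coincide with $\beta$ (or $\alpha$) and so fail to lie in the open interval. The clean way around this is to notice that the lemma only asserts a lower bound on denominators, not that $\gamma$ itself lies in $(\alpha,\beta)$; the estimates above bound the denominator of every rational strictly inside the interval from below by that of $\gamma$ regardless of whether $\gamma$ is attained, so the delicate endpoint cases never have to be resolved separately.
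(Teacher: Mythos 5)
Your proof is correct, and it takes a genuinely different---and in one respect more complete---route than the paper. The paper first shows, by iterating the floor-and-reciprocal step, that every real in $(\alpha,\beta)$ has a continued fraction expansion beginning with $a_0,a_1,\ldots,a_{k-1}$; it then uses the same parity observation as you (the value is increasing in the $k$-th partial quotient when $k$ is even, decreasing when $k$ is odd) to place $\gamma=[a_0;a_1,\ldots,a_{k-1},\min(a_k,b_k)+1]$ strictly between $\alpha$ and $\beta$, and at that point simply \emph{asserts} that this $\gamma$ is the fraction of smallest denominator in the interval. Your argument replaces both halves: the shared-prefix step becomes the observation that $(\alpha,\beta)\subseteq f([1,\infty))$ for the M\"obius map $f(t)=\frac{p_{k-1}t+p_{k-2}}{q_{k-1}t+q_{k-2}}$, and---more importantly---the asserted minimality is actually proved, via unimodularity of $\left(\begin{smallmatrix}p_{k-1}&p_{k-2}\\ q_{k-1}&q_{k-2}\end{smallmatrix}\right)$ (so that $f(a/b)$ has reduced denominator exactly $q_{k-1}a+q_{k-2}b$), followed by minimization of this linear form over tails $a/b>c_k-1$; your case split $b=1$ versus $b\geq 2$ checks out, since $a\geq(c_k-1)b+1$ gives $q_{k-1}a+q_{k-2}b\geq q_{k-1}(2c_k-1)+2q_{k-2}\geq q_{k-1}c_k+q_{k-2}$ using $c_k\geq 2$. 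So your write-up supplies precisely the step the paper leaves to folklore, and it is also more robust at the edges: it does not need the implicit assumption that the two expansions genuinely differ at index $k$ (your argument survives $a_k=b_k$), and it yields the denominator bound even when $\gamma$ coincides with a rational endpoint and hence does not lie in the open interval---which is all that the paper's application in Theorem~\ref{Theorem17} requires, since only the lower bound on denominators of fractions inside the interval is ever used. The trade-off is brevity: the paper's version stays entirely within elementary continued-fraction manipulations, whereas yours needs the convergent recurrences and the determinant identity, both standard.
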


\begin{proof}
First, let $\gamma$ be some real number with $\alpha<\gamma<\beta$. Since $\floor{\alpha}=a_0=\floor{\beta}$, we also have $\floor{\gamma}=a_0$. Thus, $0<\alpha-a_0<\gamma-a_0<\beta-a_0<1$ and, therefore, $1<\tfrac{1}{\beta-a_0}<\tfrac{1}{\gamma-a_0}<\tfrac{1}{\alpha-a_0}$. Since $\floor{\tfrac{1}{\beta-a_0}}=a_1=\floor{\tfrac{1}{\alpha-a_0}}$, we also get $\floor{\tfrac{1}{\gamma-a_0}}=a_1$, and so on.

Thus, for every real number $\gamma$ within the interval $(\alpha,\beta)$, we get the same beginning in the continued fraction expansion of $\gamma$: $\gamma=[a_0;a_1,\ldots,a_{k-1},\ldots]$. 

If $k$ is even, increasing the partial denominator increases the fraction. Thus, we have $a_k< b_k$ and  \[[a_0;a_1,\ldots,a_{k-1},a_k] < \alpha < [a_0;a_1,\ldots,a_{k-1},a_k+1] < \beta.\] If we choose $c_k:=a_k+1=\min(a_k+b_k)+1$, we get the fraction with smallest denominator in this interval.

If $k$ is odd, decreasing the partial denominator increases the fraction. Thus, we have $a_k>b_k$ and \[\alpha < [a_0;a_1,\ldots,a_{k-1},b_k+1] < \beta < [a_0;a_1,\ldots,a_{k-1},b_k].\]  If we choose $c_k:=b_k+1=\min(a_k+b_k)+1$, we get the fraction with smallest denominator in this interval.
\end{proof}

\begin{theorem}[Main Theorem]\label{Theorem17}
There is no $m$-cycle with $m\leq 91$.
\end{theorem}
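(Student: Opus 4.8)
The plan is to argue by contradiction. Assume a nontrivial $m$-cycle exists with $m\leq 91$, and let $K$ and $L$ be its numbers of o-steps and e-steps. I will manufacture a lower bound on $K$ that exceeds the upper bound on $K$ which Simons and de Weger~\cite{SW-www} attach to an $m$-cycle for this value of $m$; the resulting contradiction rules out such a cycle.

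The mechanism couples two opposing facts about the single rational number $\frac{K+L}{K}$. From Theorem~\ref{Theorem11}, refined by Corollary~\ref{Corollary_TSumme_ohneRestglied}, Corollary~\ref{Computer_estimate_m_T}, and above all Theorem~\ref{Theorem15}, this number is trapped in an open interval $(\delta,\beta)$ whose right endpoint has the form $\beta=\delta+\frac{1}{K}\cdot E(m,K,X_0)$ with $E>0$, and which therefore contracts toward the irrational $\delta$ as $K$ increases. On the other hand, $\frac{K+L}{K}$ is a fraction whose reduced denominator divides $K$, so by Lemma~\ref{Lemma16} its denominator is at least the denominator of the fraction $\gamma$ that the lemma builds from the continued fraction expansions of $\delta$ and $\beta$. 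Writing $D(\delta,\beta)$ for that minimal denominator, we obtain $K\geq D(\delta,\beta)$; since $\beta$ itself depends on $K$, this is a self-referential inequality.

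First I would fix $m\leq 91$ and start from a crude lower bound $K\geq K^{(0)}$---the bound $K>7.2\cdot 10^{10}$ from~\cite{He2,Pu} is available, but even a weak seed works. Because $\beta$ decreases in $K$ and $K\geq K^{(0)}$, the number $\frac{K+L}{K}$ lies in $(\delta,\beta(K^{(0)}))$; I would compute the continued fraction expansions of $\delta$ and of the explicit rational endpoint $\beta(K^{(0)})$, apply Lemma~\ref{Lemma16} to read off $D(\delta,\beta(K^{(0)}))=:K^{(1)}$, and conclude $K\geq K^{(1)}$. Contracting the interval can only enlarge the minimal denominator, so $K^{(1)}\geq K^{(0)}$, and feeding $K^{(1)}$ back through $\beta$ gives a tighter interval and a larger bound $K^{(2)}:=D(\delta,\beta(K^{(1)}))$, and so on, where at each round Theorem~\ref{Theorem15} is applied with the largest auxiliary $m_2$ admissible for the current value of $K$. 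The nondecreasing sequence $K^{(0)}\leq K^{(1)}\leq K^{(2)}\leq\cdots$ is iterated until it overtakes the Simons--de Weger maximum $K_{\max}(m)$, which is the desired contradiction for this $m$. Running this finite search over all $m\leq 91$ in {\tt Sagemath} finishes the proof.

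The hard part will be ensuring that the iteration genuinely climbs past $K_{\max}(m)$ for every $m$ in the range, rather than stalling at a fixed point below it: this is precisely why the tight estimates of Theorem~\ref{Theorem15} and the computer-assisted Corollary~\ref{Computer_estimate_m_T} are indispensable, since the cruder Corollary~\ref{Corollary_TSumme_ohneRestglied} may be too weak for the larger values of $m$. A second point demanding care is rigor in the use of Lemma~\ref{Lemma16}: the continued fraction expansion of the irrational $\delta$ and of the endpoint $\beta$ must be controlled with certified, not merely floating-point, accuracy so that $D(\delta,\beta)$ is a true lower bound, and one must remember that $\frac{K+L}{K}$ need not be in lowest terms, so only the inequality $K\geq D(\delta,\beta)$ for the full denominator $K$ is available.
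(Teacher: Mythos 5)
Your proposal is correct and takes essentially the same approach as the paper: the paper likewise seeds the iteration with a known lower bound on $K$ (it uses $K>7\cdot 10^{11}$ from Theorem~3 of \cite{SW-www} rather than the $m$-independent $7.2\cdot 10^{10}$), then repeatedly applies Theorem~\ref{Theorem15} with the largest admissible $m_2$ and Lemma~\ref{Lemma16} to shrink the interval around $\delta$ and enlarge the minimal denominator, reaching $K>7.94\cdot 10^{21}$ after seven rounds, which exceeds the Simons--de Weger upper bound $K<1.4784\,m\delta^m<2.2\cdot 10^{20}$ for $m\leq 91$. The only notable (and harmless) difference is that the paper runs the iteration once uniformly for all $m\leq 91$ instead of looping over each $m$, and your side claim $K^{(1)}\geq K^{(0)}$ is neither needed nor justified by interval nesting alone---each iterate is independently a valid lower bound, which is all the argument requires.
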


\begin{proof}
Let $m\leq 91$. From Theorem~3 in \cite{SW-www} we know that $K>7\cdot 10^{11}$. 

With $X_0=704\cdot 2^{60}\approx 8.1\cdot10^{20}$, as given in Definition~\ref{Definition8}, we get $m_2\geq 47$. Now, from Theorem~\ref{Theorem15} we have
\[\delta < \frac{K+L}{K} < \delta + 6.9\cdot 10^{-32}.\]
By using continued fractions as in Lemma~\ref{Lemma16}, we can now get a better lower bound for $K$ with
\[K>5.2\cdot 10^{15}.\]
And now we can reiterate this process! 

In the next run, we get $m_2\geq 67, \delta<\tfrac{K+L}{K}<\delta+5.1\cdot 10^{-36}$ and, therefore, $K>3.97\cdot 10^{17}$.

Applying this newer bound on $K$ once more, we get $m_2\geq 77$, \linebreak $\delta<\tfrac{K+L}{K}<\delta+4.1\cdot 10^{-38}$ and, hence, $K>4.64\cdot 10^{18}$. 

In the forth run we get $m_2\geq 82, \delta<\tfrac{K+L}{K}<\delta+2.3\cdot 10^{-39}$ and with this $K>2.74\cdot 10^{19}$. From there we get $m_2\geq 86, \delta<\tfrac{K+L}{K}<\delta+2.3\cdot 10^{-40}$ and $K>7.76\cdot 10^{19}$.  And from this we get $m_2\geq 88, \delta<\tfrac{K+L}{K}<\delta+5.3\cdot 10^{-41}$ and  $K>2.05\cdot 10^{20}$.

Applying this lower bounnd a last time we get $m_2\geq 91, \delta<\tfrac{K+L}{K}<\delta+1.11\cdot 10^{-43}$ and  $K>7.94\cdot 10^{21}$.

But this last lower bound on $K$ is larger than the upper bound of $K<1.4784 m\delta^m<2.2\cdot10^{20}$ given by Simons and de Weger~\cite{SW-www}. Thus, no such $m$-cycle can exist.
\end{proof}

Using the same technique as above and Corollaries~\ref{Corollary_TSumme_ohneRestglied} and~\ref{Computer_estimate_m_T}, we also get new lower bounds on $K$ for $m$-cycles with $m\geq 92$:

\begin{corollary}\label{Corollary_Computer_greater_m_estimate}
In Table~\ref{Collatz_Table_One} different pairs of values $m$ und $K$ are listed. If there is a $m$-cycle with $m$ equal or smaller than the given value, this cycle consists of at least the corresponding number of $K$ odd members.

\begin{table}[htb]
\begin{center}
\begin{tabular}{c|c}
$m$ & $K$\\
\hline
$98$ & $7.76\cdot 10^{19}$ \\
$117$ & $2.74\cdot 10^{19}$\\
$369$ & $4.64\cdot 10^{18}$\\
$4366$ & $3.97\cdot 10^{17}$\\
$17096$ & $1.30\cdot 10^{17}$\\
$\vdots$ & $\vdots$\\
$802380$ & $5.26\cdot 10^{15}$\\
$1.07\cdot 10^6$ & $4.78\cdot 10^{15}$\\
$\vdots$ & $\vdots$\\
$1.89\cdot 10^9$ & $1.64\cdot 10^{12}$\\
$2.18\cdot 10^9$ & $8.90\cdot 10^{11}$\\
$1.34\cdot 10^{10}$ & $1.37\cdot 10^{11}$\\
For all $m\in\mathbb{N}$ & $7.20\cdot 10^{10}$
\end{tabular}
\caption{Corollary~\ref{Corollary_Computer_greater_m_estimate}: If $m\leq \cdots$, then $K>\cdots$.}
\label{Collatz_Table_One}
\end{center}
\end{table}
\end{corollary}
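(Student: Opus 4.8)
The plan is to reuse, essentially verbatim, the bootstrapping iteration from the proof of Theorem~\ref{Theorem17}, but now to run it separately for each admissible upper bound $M$ on the number of local minima and to record the fixed point it reaches. Concretely, I would fix a target value $M\geq 92$ and suppose an $m$-cycle with $m\leq M$ exists. As a starting point I would take the universal, $m$-independent lower bound $K>7.2\cdot10^{10}$ of Hercher and Puchert (the bottom row of Table~\ref{Collatz_Table_One}), which holds regardless of $m$.

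The heart of the iteration is the same interplay as before. Given a current lower bound on $K$, I would combine Theorem~\ref{Theorem11} with the sharpest available upper bound on $\sum_{i=1}^m T(n_i)$ to obtain an interval $\delta<\frac{K+L}{K}<\delta+\varepsilon$ with $\varepsilon=\varepsilon(M,K)$ small. For the $m$-dependence one uses Theorem~\ref{Theorem15} as long as its hypothesis on $m_2$ is satisfied (this is where the growth bound $k_{i+j}<\delta^{j}\cdot\frac{\log(n_i+1)}{\log(2)}$ sharpens the estimate), and falls back on the simpler proportional bounds of Corollary~\ref{Corollary_TSumme_ohneRestglied} and the computer-strengthened Corollary~\ref{Computer_estimate_m_T} otherwise; the latter replaces the per-summand constant $\frac{97}{54}\cdot\frac{1}{X_0}$ by $\frac{1}{704\cdot2^{60}}$. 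Then I would apply Lemma~\ref{Lemma16} to the narrow interval $(\delta,\delta+\varepsilon)$ using the continued fraction expansion of $\delta=\frac{\log(3)}{\log(2)}$: since $\frac{K+L}{K}$ is itself a fraction lying in this interval, its reduced denominator---and hence $K$ itself---is at least the minimal denominator produced by the lemma. This yields a strictly larger lower bound on $K$, which is fed back into the next round. Because the relevant interval endpoints are governed by the convergents of $\delta$, the achievable lower bounds on $K$ form a discrete, quantized sequence, and the iteration for a fixed $M$ terminates at a well-defined fixed point $K^{*}(M)$.

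To assemble Table~\ref{Collatz_Table_One} I would exploit the monotonicity of the construction: enlarging $M$ only widens $\varepsilon$ for a fixed $K$ and therefore cannot increase $K^{*}(M)$, so $K^{*}(M)$ is non-increasing in $M$ and takes each quantized value on a whole range of $M$. For each value $K_0$ in the table I would then determine, by scanning over $M$, the largest $M$ for which the iteration still certifies $K>K_0$, and record the pair $(M,K_0)$; as $M\to\infty$ the $m$-proportional term dominates, no improvement over the input is possible, and one recovers the universal $K>7.2\cdot10^{10}$. I expect the main obstacle to be computational rather than conceptual: each round requires rigorous control of the rounding in the continued-fraction step of Lemma~\ref{Lemma16} (so that the forced denominator is genuinely a valid lower bound), a correct determination of $m_2$ and $v$ in Theorem~\ref{Theorem15} at every stage, and a careful sweep over many values of $M$ to locate the thresholds. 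This bookkeeping is precisely what the {\tt Sagemath} worksheet referred to above automates.
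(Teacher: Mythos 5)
Your proposal is correct and takes essentially the same approach as the paper: the paper obtains each row by the same bootstrapping iteration as in Theorem~\ref{Theorem17} (bound on $\tfrac{K+L}{K}$, then Lemma~\ref{Lemma16}, then repeat until a fixed point), using Theorem~\ref{Theorem15} for the first two rows, Corollary~\ref{Computer_estimate_m_T} for the remaining rows, and the known $m$-independent bound $K>7.2\cdot 10^{10}$ from the literature for the last row. The only cosmetic difference is that you describe sweeping over $M$ to locate the thresholds where the quantized fixed point drops, whereas the paper simply reports the resulting pairs from its {\tt Sagemath} computation.
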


\begin{proof}
The first two lines follow from Theorem~\ref{Theorem15}, all others except the last one from Corollary~\ref{Computer_estimate_m_T}, and the last one from the statement independent of~$m$ in \cite{He,Pu} or \cite{SE}, using the known value of $X_0$ as in Definition~\ref{Definition8} and given in \cite{Barina-www}.
\end{proof}

\begin{remark}
Over time, with better known lower limits on $X_0$, one gets slightly better results in Corollary~\ref{Corollary_Computer_greater_m_estimate}. The results in this corollary are in some cases significant improvements on the bounds given by  Theorem~3 in \cite{SW-www}.
\end{remark}

\section{Cycles without knowing $m$}\label{section5}
In a manner similar to that in Lemma~\ref{Lemma_T-Abschaetzung} and Corollary~\ref{Korrolar_besserTAbschaetzung}, we want to give an upper bound on another weighted average of the $T(n_i)$:

\begin{lemma}\label{Lemma_T-Abschaetzung_ohne_m}
For all $1\leq i$, we have 
\begin{align*}
T(n_i) & < k_i \cdot \frac{3}{4} \cdot\frac{1}{X_0},\\
T(n_i) + T(n_{i+1}) & < (k_i+k_{i+1}) \cdot \frac{3}{4} \cdot\frac{1}{X_0}, \\
\text{ or } T(n_i) + T(n_{i+1})  + T(n_{i+2}) & < (k_i+k_{i+1}+k_{i+2}) \cdot \frac{3}{4} \cdot\frac{1}{X_0}.
\end{align*}
\end{lemma}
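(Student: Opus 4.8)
The plan is to mirror the case analysis used in the proofs of Lemma~\ref{Lemma_T-Abschaetzung} and Corollary~\ref{Korrolar_besserTAbschaetzung}, but to keep track of the number of o-steps $k_i$ explicitly, since the bound is now weighted by $k_i$. Throughout I will use the geometric estimate $T(n_i)<\left(3-3\left(\tfrac23\right)^{k_i}\right)\tfrac1{n_i}$ established there, together with $n_i>X_0$ (so that $\tfrac1{n_i}<\tfrac1{X_0}$), and two structural facts: the merging Lemma~\ref{Collatz-merger} (if $\ell_i\ge2$ then $n_i>2X_0$) and the elementary ``descent principle'' that a local minimum whose successor is smaller must itself be comfortably above $X_0$.

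The first reduction handles large $k_i$. For $k_i\ge3$ one has $1-\left(\tfrac23\right)^{k_i}\le\tfrac{k_i}4$ (checked directly at $k_i=3$, where $\tfrac{19}{27}<\tfrac34$, and trivial for $k_i\ge4$), so the geometric estimate already yields the first alternative $T(n_i)<k_i\cdot\tfrac34\cdot\tfrac1{X_0}$. Hence I may assume $k_i\in\{1,2\}$. The second reduction handles many e-steps or a descent: if $\ell_i\ge2$, then Lemma~\ref{Collatz-merger} forces $n_i>2X_0$, and for $k_i=2$ we get $T(n_i)<\tfrac{5}{3n_i}<\tfrac{5}{6X_0}<2\cdot\tfrac34\cdot\tfrac1{X_0}$, while for $k_i=1$ even $T(n_i)=\tfrac1{n_i}<\tfrac1{2X_0}<\tfrac34\cdot\tfrac1{X_0}$; either way the first alternative holds. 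If $k_i=1$ and $\ell_i=1$, then $n_{i+1}=\tfrac{3n_i+1}4$, and since $n_{i+1}$ is an integer exceeding $X_0$ this forces $n_i>\tfrac43X_0$, whence $T(n_i)=\tfrac1{n_i}<\tfrac3{4X_0}$, again the first alternative.

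This leaves the single hard case $k_i=2,\ \ell_i=1$, where $n_{i+1}=\tfrac{9n_i+5}8>\tfrac98 n_i$. Here I combine forward. Using $T(n_{i+1})<\left(3-3(\tfrac23)^{k_{i+1}}\right)\tfrac1{n_{i+1}}<\tfrac83\left(1-(\tfrac23)^{k_{i+1}}\right)\tfrac1{n_i}$, the pair alternative $T(n_i)+T(n_{i+1})<(2+k_{i+1})\tfrac34\tfrac1{n_i}$ reduces to $\tfrac53+\tfrac83\left(1-(\tfrac23)^{k_{i+1}}\right)\le\tfrac34(2+k_{i+1})$, which holds for $k_{i+1}\ge3$. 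The remaining subcases $k_{i+1}\in\{1,2\}$ I push to the triple alternative, bounding $n_{i+2}$ below through the growth factor and, crucially, invoking the descent/merging principle on $n_{i+1}$ and $n_{i+2}$ to replace the bare bound $n_i>X_0$ by a larger one (for instance $\ell_{i+2}=1$ forces $n_{i+2}>\tfrac43X_0$, hence $n_i$ is bounded below by roughly $\tfrac{256}{243}X_0$, and $\ell\ge2$ gives an even larger bound via merging).

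The main obstacle is exactly these residual triple cases --- chains of $k=2,\ \ell=1$ steps terminating in a $k=1$ step. The constant $\tfrac34$ is essentially optimal for this method, so the worst configurations close only barely: the sharpest one produces a sum of roughly $3.74\cdot\tfrac1{X_0}$ against the target bound $3\cdot\tfrac34\cdot\tfrac1{X_0}=3.75\cdot\tfrac1{X_0}$, and it closes only after the boosted lower bound on $n_i$ has been substituted. The write-up therefore requires a careful, if routine, enumeration of the subcases according to $(k_{i+1},\ell_{i+1},k_{i+2},\ell_{i+2})$, in each of which I verify the corresponding one-, two-, or three-term alternative; I expect no conceptual difficulty beyond keeping these tight numerical margins under control.
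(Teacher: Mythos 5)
Your proposal is correct and takes essentially the same approach as the paper's proof: the same reduction to $k_i\in\{1,2\}$ via the geometric bound, the same use of the merging lemma when $\ell\geq 2$ and of the descent bound (a $k=1$ step forces $n\geq\tfrac{4}{3}X_0+1$) back-propagated through the growth factors $\tfrac{9}{8}$ and $\tfrac{81}{64}$, and the same tight analysis of the $k=2,\ \ell=1$ chains---your worst margin (roughly $3.74$ versus $3.75$ times $\tfrac{1}{X_0}$) is exactly the paper's hardest configuration. One small slip to fix in the write-up: it is $k_{i+2}=1$ (not $\ell_{i+2}=1$) that forces $n_{i+2}>\tfrac{4}{3}X_0$; with that correction your subcase enumeration closes just as the paper's does.
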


\begin{proof}
As in the proofs of Lemma~\ref{Lemma_T-Abschaetzung} and Corollary~\ref{Korrolar_besserTAbschaetzung}, let $k_i$ be the exact number of o-steps following $n_i$ and $\ell_i$ the exact number of e-steps following them, such that we have $n_{i+1}=C^{k_i+\ell_i}(n_i)$.

If $k_i=1$, there exists a nonnegative integer $a$ with $n_i=4\cdot a + 1$ such that $C^2(n_i)=3\cdot a +1$. Since $C^2(n_i)\geq X_0+1$, we have $n_i\geq \tfrac{4}{3} \cdot X_0 + 1>\tfrac{4}{3} \cdot X_0$ and, therefore, $T(n_i)<\tfrac{3}{4} \cdot \tfrac{1}{X_0}$.

If $k_i=3$ ,we know $T(n_i)<\tfrac{19}{9} \cdot \tfrac{1}{X_0}<3 \cdot \tfrac{3}{4} \cdot \tfrac{1}{X_0}$.

If $k_i\geq 4$, we have $T(n_i)<3 \cdot \tfrac{1}{X_0} \leq k_i \cdot \tfrac{3}{4} \cdot \tfrac{1}{X_0}$.

This only leaves the case $k_i=2$ for further investigation. There we have $T(n_i)=\tfrac{5}{3} \cdot\tfrac{1}{n_i}$. If $\ell_i\geq 2$, we know from the cited proofs that $n_i>2X_0$ and, therefore, $T(n_i) <\tfrac{5}{6} \cdot\tfrac{1}{X_0}<2 \cdot \tfrac{3}{4} \cdot \tfrac{1}{X_0}$. 

Thus, let $k_i=2$ and $\ell_i=1$. Then there exists a nonnegative integer $a$ with $n_i=8\cdot a + 3$ and $n_{i+1}=9 \cdot a + 4$. If $k_{i+1}=1$, we have $n_{i+1}\geq \tfrac{4}{3} \cdot X_0 + 1$ and $n_i\geq \tfrac{32}{27} \cdot X_0 + \tfrac{1}{3}>\tfrac{32}{27} \cdot X_0$. With this, we get
\[T(n_i)< \frac{5}{3} \cdot \frac{27}{32} \cdot \frac{1}{X_0}=\frac{45}{32}\cdot \frac{1}{X_0} < 2 \cdot \frac{3}{4} \cdot\frac{1}{X_0}.\]

In the case $k_i=2$ and $\ell_i=1$ we generally have $n_{i+1} > \tfrac{9}{8} \cdot n_{i}$. Thus, if $k_{i+1}\geq 5$, we get
\begin{align*}
T(n_i)+T(n_{i+1})&<\left( \frac{5}{3} + 3 \cdot \frac{9}{8}\right)\cdot \frac{1}{X_0} = \frac{121}{24} \cdot \frac{1}{X_0}\\
\quad &<7 \cdot \frac{3}{4}\cdot \frac{1}{X_0} \leq (k_i+k_{i+1}) \cdot \frac{3}{4}\cdot \frac{1}{X_0}.
\end{align*}

If $k_i=2, \ell_i=1$ and $k_{i+1}=4$, with the better estimate $T(n_{i+1})<\tfrac{65}{27} \cdot \tfrac{1}{n_{i+1}}$ the following inequality holds:
\begin{align*}
T(n_i)+T(n_{i+1})&<\left( \frac{5}{3} + \frac{65}{27} \cdot \frac{9}{8}\right)\cdot \frac{1}{X_0} = \frac{105}{24} \cdot \frac{1}{X_0}\\
\quad &<6 \cdot \frac{3}{4}\cdot \frac{1}{X_0} = (k_i+k_{i+1}) \cdot \frac{3}{4}\cdot \frac{1}{X_0}.
\end{align*}

This leaves $k_i=2, \ell_i=2$ and $k_{i+1}\in\{2,3\}$. If $\ell_{i+1}\geq 2$, we have $n_{i+1}>2X_0$ as above and, therefore, $n_{i+1}\geq 2X_0+1$ and, hence, $n_i=\tfrac{8}{9} \cdot n_{i+1}-\tfrac{5}{9}>\tfrac{16}{9} \cdot X_0$. With this, we get
\[T(n_i) < \frac{5}{3} \cdot \frac{9}{16} \cdot \frac{1}{X_0} < 2 \cdot \frac{3}{4}  \cdot \frac{1}{X_0}.\]

Therefore, from now on we can assume $\ell_{i+1}=1$. Let $k_i=2, \ell_i=1$ and in this case $k_{i+1}=2$. Then there exists a nonnegative integer~$a$ with $n_i=2^6 \cdot a + 59$ and $C^{2+1+2+1}(n_i)=n_{i+2}=3^4 \cdot a + 76$. Moreover, we have 
\[T(n_i)+T(n_{i+1})< \frac{5}{3} \cdot \frac{1}{n_i}+ \frac{5}{3} \cdot \frac{1}{n_{i+1}}<\frac{5}{3} \cdot \frac{1}{n_i}+ \frac{5}{3} \cdot \frac{8}{9} \cdot \frac{1}{n_{i}}=\frac{85}{27} \cdot \frac{1}{n_i}.\]

If $k_i=2, \ell_i=1, k_{i+1}=2, \ell_{i+1}=1$ and $k_{i+2}=1$, we get $n_{i+2}\geq \tfrac{4}{3} \cdot X_0+1$ and, therefore, $n_i> \tfrac{256}{243} \cdot X_0 -1$. Hence,
\begin{align*}
T(n_i)+T(n_{i+1})&< \frac{85}{27} \cdot \frac{n_i+1}{n_i} \cdot \frac{1}{n_i+1} \\
\quad &< \frac{85}{27} \cdot \frac{243}{256} \cdot \frac{n_i+1}{n_i} \cdot \frac{1}{X_0}\\
\quad &= \frac{765}{256} \cdot \frac{n_i+1}{n_i} \cdot \frac{1}{X_0}\\
\quad &< \frac{765}{256} \cdot \frac{766}{765} \cdot \frac{1}{X_0}\text{ (as $n_i>X_0>765$)}\\
\quad &< \frac{766}{256} \cdot \frac{1}{X_0} < 4\cdot \frac{3}{4} \cdot \frac{1}{X_0}.
\end{align*}

If $k_{i+2}=2$, we get 
\[T(n_{i+2})=\tfrac{5}{3} \cdot \tfrac{1}{n_{i+2}}<\tfrac{5}{3} \cdot \tfrac{64}{81} \cdot \tfrac{1}{X_0}=\tfrac{320}{243} \cdot \tfrac{1}{X_0}\]
and, thus,
\begin{align*}
T(n_i)+T(n_{i+1})+T(n_{i+2})&<\left( \frac{85}{27} + \frac{320}{243}\right)\cdot \frac{1}{X_0} = \frac{1085}{243} \cdot \frac{1}{X_0}\\
\quad &<6 \cdot \frac{3}{4}\cdot \frac{1}{X_0} = (k_i+k_{i+1}+k_{i+2}) \cdot \frac{3}{4}\cdot \frac{1}{X_0}.
\end{align*}

In the same way, for $k_{i+2}=3$ we get $T(n_{i+2})<\tfrac{19}{9} \cdot \tfrac{64}{81} \cdot \tfrac{1}{X_0}=\tfrac{1216}{729} \cdot \tfrac{1}{X_0}$ and
\begin{align*}
T(n_i)+T(n_{i+1})+T(n_{i+2})&<\left( \frac{85}{27} + \frac{1216}{729}\right)\cdot \frac{1}{X_0} = \frac{3511}{729} \cdot \frac{1}{X_0}\\
\quad &<7 \cdot \frac{3}{4}\cdot \frac{1}{X_0} = (k_i+k_{i+1}+k_{i+2}) \cdot \frac{3}{4}\cdot \frac{1}{X_0}.
\end{align*}

For $k_{i+2}\geq 4$ we have $T(n_{i+2})<3 \cdot \tfrac{64}{81} \cdot \tfrac{1}{X_0}=\tfrac{64}{27} \cdot \tfrac{1}{X_0}$. Thus, 
\begin{align*}
T(n_i)+T(n_{i+1})+T(n_{i+2})&<\left( \frac{85}{27} + \frac{64}{27}\right)\cdot \frac{1}{X_0} = \frac{149}{27} \cdot \frac{1}{X_0}\\
\quad &<8 \cdot \frac{3}{4}\cdot \frac{1}{X_0} = (k_i+k_{i+1}+k_{i+2}) \cdot \frac{3}{4}\cdot \frac{1}{X_0},
\end{align*}
which concludes the proof in the case $k_i=2, \ell_i=1, k_{i+1}=2$ and $\ell_{i+1}=1$. 
 
Now we only need to consider the case $k_i=2, \ell_i=1, k_{i+1}=3$ and $\ell_{i+1}=1$. Here, there exists a nonnegative integer~$a$ with $n_i=2^7 \cdot a + 91$ and $C^{2+1+3+1}(n_i)=n_{i+2}=3^5 \cdot a + 175$. We also have
\[T(n_i)+T(n_{i+1})< \frac{5}{3} \cdot \frac{1}{n_i}+ \frac{19}{9} \cdot \frac{1}{n_{i+1}}<\frac{5}{3} \cdot \frac{1}{n_i}+ \frac{19}{9} \cdot \frac{8}{9} \cdot \frac{1}{n_{i}}=\frac{297}{81} \cdot \frac{1}{n_i}.\]
 
If $k_{i+2}=1$, we have $T(n_{i+2})=\tfrac{1}{n_{i+2}}<\tfrac{128}{243} \cdot \tfrac{1}{n_i}$ and, therefore,
\begin{align*}
T(n_i)+T(n_{i+1})+T(n_{i+1})&< \left(\frac{297}{81}+\frac{128}{243}\right) \cdot \frac{1}{X_0}\\
\quad &= \frac{1019}{243}\cdot \frac{1}{X_0} < 6 \cdot \frac{3}{4}\cdot \frac{1}{X_0}\\
\quad &=  (k_i+k_{i+1}+k_{i+2}) \cdot \frac{3}{4}\cdot \frac{1}{X_0}.
\end{align*}

At last, if $k_{i+2}\geq 2$ we have $T(n_{i+2})<\tfrac{3}{n_{i+2}}<\tfrac{128}{81} \cdot \tfrac{1}{n_i}$, and with this, we get
\begin{align*}
T(n_i)+T(n_{i+1})+T(n_{i+1})&< \left(\frac{297}{81}+\frac{128}{81}\right) \cdot \frac{1}{X_0}\\
\quad &= \frac{425}{81}\cdot \frac{1}{X_0} < 7 \cdot \frac{3}{4}\cdot \frac{1}{X_0}\\
\quad &\leq  (k_i+k_{i+1}+k_{i+2}) \cdot \frac{3}{4}\cdot \frac{1}{X_0},
\end{align*}
which concludes the last open case and, therefore, the proof.
\end{proof}

This leads to a result similar to that yielded by the Main Theorem~\ref{Theorem15}, but independent of the number~$m$ of local minima in the nontrivial cycle:

\begin{theorem}\label{Theorem-Abschaetzungohnem}
Let $K$ be the number of odd numbers in a given $m$-cycle, and let $L$ be the number of even numbers in it. Then
\[\delta <\frac{K+L}{K}<\delta+\frac{1}{3 \log(2)} \cdot \frac{3}{4} \cdot \frac{1}{X_0}.\]
\end{theorem}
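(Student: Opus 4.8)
The plan is to derive this bound in exactly the same spirit in which Corollary~\ref{Corollary_TSumme_ohneRestglied} follows from Theorem~\ref{Theorem_bessereT_Summe}, now using the $k_i$-weighted estimate of Lemma~\ref{Lemma_T-Abschaetzung_ohne_m} in place of the constant-per-summand estimate. Theorem~\ref{Theorem11} already supplies the lower bound $\delta<\tfrac{K+L}{K}$ for free, together with the upper bound
\[\frac{K+L}{K} < \delta + \frac{1}{K\cdot 3\log(2)} \cdot \sum_{i=1}^m T(n_i).\]
So the entire task reduces to establishing $\sum_{i=1}^m T(n_i)\leq K\cdot\tfrac{3}{4}\cdot\tfrac{1}{X_0}$. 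Once that inequality is in hand, the factor $K$ cancels against the $\tfrac{1}{K}$ coming from Theorem~\ref{Theorem11}, and we land precisely on $\delta+\tfrac{1}{3\log(2)}\cdot\tfrac{3}{4}\cdot\tfrac{1}{X_0}$; this cancellation is exactly why the resulting estimate is independent of both $m$ and $K$.

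To bound the sum I would break the cyclic sum $\sum_{i=1}^m T(n_i)$ into complete partial sums of one, two, or three consecutive summands in the sense guaranteed by Lemma~\ref{Lemma_T-Abschaetzung_ohne_m}. The decisive feature of that lemma is that each complete partial sum is bounded not by a fixed constant times its length, but by $\tfrac{3}{4}\cdot\tfrac{1}{X_0}$ times the total number $k_i+k_{i+1}+\cdots$ of o-steps the summands account for. Because $\sum_{i=1}^m k_i=K$, summing these bounds over any tiling of the whole cycle contributes exactly $K\cdot\tfrac{3}{4}\cdot\tfrac{1}{X_0}$, no matter how the tiling happens to fall.

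The one technical point, which I expect to be the only real obstacle, is that a greedy left-to-right tiling of the finite list $T(n_1),\dots,T(n_m)$ need not terminate exactly at $n_m$, leaving a boundary remainder of at most two summands. I would dispose of this just as in the proof of Corollary~\ref{Corollary_TSumme_ohneRestglied}: fix a large integer $t$, apply the tiling to the $tm$ summands $T(n_1),\dots,T(n_{tm})$ (using $n_{m+1}=n_1$, and so on), bound the at most two leftover terms by the trivial estimate $T(n_j)<\tfrac{3}{X_0}$ from Remark~\ref{Remark_T-trvialBound}, and observe that the complete partial sums together account for at most $tK$ o-steps. This yields
\[t\sum_{i=1}^m T(n_i) = \sum_{i=1}^{tm} T(n_i) < tK\cdot\frac{3}{4}\cdot\frac{1}{X_0} + \frac{6}{X_0}.\]
Dividing by $t$ and letting $t\to\infty$ absorbs the fixed remainder, giving $\sum_{i=1}^m T(n_i)\leq K\cdot\tfrac{3}{4}\cdot\tfrac{1}{X_0}$. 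Substituting this into Theorem~\ref{Theorem11} and cancelling $K$ then completes the proof immediately.
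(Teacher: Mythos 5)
Your proposal is correct and follows essentially the same route as the paper: reduce to $\sum_{i=1}^m T(n_i)\leq K\cdot\tfrac{3}{4}\cdot\tfrac{1}{X_0}$ via Theorem~\ref{Theorem11}, tile the extended sum $\sum_{i=1}^{tm}T(n_i)$ into complete blocks of at most three summands using the $k_i$-weighted bounds of Lemma~\ref{Lemma_T-Abschaetzung_ohne_m}, absorb the at most two leftover terms with the trivial bound from Remark~\ref{Remark_T-trvialBound}, and let $t\to\infty$. The paper's proof differs only cosmetically (it writes the remainder as $8\cdot\tfrac{3}{4}\cdot\tfrac{1}{X_0}$ rather than $\tfrac{6}{X_0}$), so there is nothing to add.
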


\begin{proof}
As in the proof of Theorem~\ref{Theorem15}, the only thing we have to do is proving the inequality
\[\sum_{i=1}^m T(n_i) \leq K \cdot \frac{3}{4} \cdot \frac{1}{X_0}.\]
To do so, we proceed as we did in the proofs of Theorem~\ref{Theorem_bessereT_Summe} and Corollary~\ref{Corollary_TSumme_ohneRestglied}. First, let $m_1$ be a positive integer and consider the sum \[\sum_{i=1}^{m_1} T(n_i).\] We break this sum into partial sums consisting of one, two, or three consecutive summands. We can do this in such a way that for all partial sums (with the possible exception of the last) we know by Lemma~\ref{Lemma_T-Abschaetzung_ohne_m} that each of these is smaller than $k \cdot \tfrac{3}{4} \cdot \tfrac{1}{X_0}$, where $k$ is the number of o-steps in the at most three consecutive strictly increasing parts of the given nontrivial cycle. The last partial sum consists of at most two summands. Using the trivial bounds $T(n_{m_1-1})<3\cdot \tfrac{1}{X_0}$ and $T(n_{m_1})<3\cdot \tfrac{1}{X_0}$, we get
\begin{align*}
\sum_{i=1}^{m_1} T(n_i) &< \left(\sum_{i=1}^{m_1} k_i\right) \cdot \frac{3}{4} \cdot \frac{1}{X_0} + 2\cdot 3 \cdot \frac{1}{X_0}\\
\quad &=\left(\sum_{i=1}^{m_1} k_i + 8\right) \cdot \frac{3}{4} \cdot \frac{1}{X_0}.
\end{align*}

Now, let $t$ be an arbitrarily large integer and $m_1=t\cdot m$. Then we get
\begin{align*}
t\cdot \sum_{i=1}^m T(n_i) &= \sum_{i=1}^{t\cdot m} T(n_i) < \left(\sum_{i=1}^{t\cdot m} k_i + 8\right) \cdot \frac{3}{4} \cdot \frac{1}{X_0}\\
\quad &=(t\cdot K + 8)\cdot  \frac{3}{4} \cdot \frac{1}{X_0}, \text{ where we used $\sum_{i=1}^{t\cdot m} k_i=t\cdot \sum_{i=1}^{m} k_i=t\cdot K$.}
\end{align*}
This leads directly to
\begin{align*}
\sum_{i=1}^m T(n_i) &<K\cdot  \frac{3}{4} \cdot \frac{1}{X_0} + \varepsilon \text{ for all $\varepsilon>0$ and, therefore,}\\
\sum_{i=1}^m T(n_i) &\leq K\cdot  \frac{3}{4} \cdot \frac{1}{X_0} \text{ as desired.}
\end{align*}
 \end{proof}

\begin{remark}\label{RemarkAbschaetzungohnem}
Theorem~\ref{Theorem-Abschaetzungohnem} leads to a reduction of the bounds on $X_0$ given in or computed by the methods in~\cite{SE}, which are needed for reaching the next threshold on~$K$ in a nontrivial cycle, by $25$\%. Thus, to prove that every nontrivial cycle contains at least $K> 1.375\cdot 10^{11}$  odd numbers, using previous methods one would have to show $\delta<\tfrac{K+L}{K}<\delta+1.1032\cdot 10^{-22}$, and therefore, $X_0\geq 3781\cdot 2^{60}$. With the result given in Theorem~\ref{Theorem-Abschaetzungohnem}, it suffices to show $X_0\geq 2836\cdot 2^{60}$.
\end{remark}

If one is interested in the problem of lowering the value for $X_0$ given in the last remark to the greatest extend possible, one can use the methods of Lemma~\ref{Lemma_T-Abschaetzung_ohne_m} in an automated way.

\begin{corollary}\label{Corollary_Computer-Bound}
If $X_0\geq 1536\cdot 2^{60}=3\cdot2^{69}$ then every nontrivial cycle contains at least $K> 1.375\cdot 10^{11}$  odd numbers. 
\end{corollary}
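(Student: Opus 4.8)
We must show that if $X_0\geq 1536\cdot 2^{60}=3\cdot 2^{69}$, then every nontrivial cycle has $K>1.375\cdot 10^{11}$ odd members.

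The plan is to mirror the structure of Theorem~\ref{Theorem-Abschaetzungohnem} and Remark~\ref{RemarkAbschaetzungohnem}, but to replace the single crude constant $\tfrac34$ with a sharper constant obtained by an automated case analysis in the spirit of Corollary~\ref{Computer_estimate_m_T}. The backbone is already in place: Theorem~\ref{Theorem11} gives
\[
\delta<\frac{K+L}{K}<\delta+\frac{1}{K\cdot 3\log(2)}\cdot\sum_{i=1}^m T(n_i),
\]
and from Lemma~\ref{Lemma16} a two-sided bound on $\tfrac{K+L}{K}$ forces a lower bound on $K$, since any fraction strictly between $\delta$ and $\delta+\varepsilon$ must have denominator at least as large as the one produced by the continued-fraction construction. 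So the entire task reduces to proving an upper bound of the form
\[
\sum_{i=1}^m T(n_i)\;\leq\; c\cdot K\cdot\frac{1}{X_0}
\]
with a constant $c<\tfrac34$ small enough that, for $X_0=3\cdot 2^{69}$, the resulting interval around $\delta$ is narrow enough to certify $K>1.375\cdot 10^{11}$ via Lemma~\ref{Lemma16}.

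First I would re-run the weighted partial-sum argument of Lemma~\ref{Lemma_T-Abschaetzung_ohne_m}, but now exploiting the \emph{known numerical} value of $X_0$ rather than carrying it as a symbol, and allowing partial sums of up to many (say, $60$) consecutive summands $T(n_i)$ instead of just three. The key mechanism is unchanged: for a block of consecutive local minima with prescribed step-pattern $(k_i,\ell_i),(k_{i+1},\ell_{i+1}),\dots$, Lemma~\ref{Lemma12} and Lemma~\ref{Collatz-merger} pin down the residue of $n_i$ modulo a fixed power of two, so $n_i\geq r$ for the least positive $r$ in that residue class; substituting $n_i\geq r$ into the geometric-series bound on each $T(n_{i+j})$ yields an upper bound of the form $c'\cdot\tfrac{1}{X_0}$ for the block, against a ``budget'' of $(k_i+\cdots)\cdot c\cdot\tfrac{1}{X_0}$. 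A block is \emph{certified} once $r$ is large enough that its bound falls under its budget. Because each o-step forces $n_i$ to grow by a factor near $\tfrac32$ (Lemma~\ref{Lemma14}), deepening the case tree rapidly drives $r$ upward, so with $60$-deep blocks and the concrete $X_0$ the search terminates with every branch certified. This is exactly the automated procedure alluded to for Corollary~\ref{Computer_estimate_m_T}, now tuned to the weighting by $k_i$ and to the target threshold; the program would be the same in spirit, outputting a verified constant $c$.

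Having secured $\sum_{i=1}^m T(n_i)\leq c\cdot K\cdot\tfrac{1}{X_0}$, the remainder is the bootstrap used in Theorem~\ref{Theorem17} and Remark~\ref{RemarkAbschaetzungohnem}: plug into Theorem~\ref{Theorem11} to get $\delta<\tfrac{K+L}{K}<\delta+\tfrac{c}{3K\log(2)X_0}$, then invoke Lemma~\ref{Lemma16} with $\alpha=\delta$ and $\beta$ the right endpoint to read off the minimal possible denominator, i.e.\ a lower bound on $K$; iterating (each improved $K$ shrinks $\beta$, which Lemma~\ref{Lemma16} converts into a still-larger $K$) until the bound stabilizes above $1.375\cdot 10^{11}$. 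The substantive obstacle is \textbf{not} any single inequality but the \emph{termination and completeness} of the case analysis: one must argue that every step-pattern branch is eventually certified under the deepening to depth $60$, and that the finitely many uncertified shallow branches are collectively harmless because their residues $r$ are already forced above the relevant multiple of $X_0$. Verifying that the chosen depth suffices for the specific target $X_0=3\cdot 2^{69}$ (as opposed to the weaker $2836\cdot 2^{60}$ obtainable by hand in Remark~\ref{RemarkAbschaetzungohnem}) is precisely where the machine computation is indispensable, and confirming its exhaustiveness is the crux of the proof.
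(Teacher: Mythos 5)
Your proposal is correct and is essentially the paper's own proof: the paper likewise runs an automated ({\tt C++}) deepening of the case analysis of Lemma~\ref{Lemma_T-Abschaetzung_ohne_m}, tracking the residue class of $n_i$ modulo powers of two and certifying a branch as soon as its least positive residue exceeds $3781\cdot 2^{60}$ (whence $T(n_i)\leq k_i/n_i<k_i\cdot\tfrac{1}{3781\cdot 2^{60}}$), and then concludes via Theorem~\ref{Theorem11} and the diophantine step of Remark~\ref{RemarkAbschaetzungohnem}. One slip to correct: with $\sum_{i=1}^m T(n_i)\leq c\cdot K\cdot\tfrac{1}{X_0}$ the factor $K$ cancels in Theorem~\ref{Theorem11}, so the bound reads $\delta<\tfrac{K+L}{K}<\delta+\tfrac{c}{3\log(2)\,X_0}$ with no $K$ in the error term; hence there is nothing to iterate---a single application of Lemma~\ref{Lemma16} certifies $K>1.375\cdot 10^{11}$ precisely when $c\leq 1536/3781\approx 0.406$, which is the threshold the computation must (and in the paper does) verify.
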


\begin{proof}
We have written a small {\tt C++} program, which mainly uses the methods given in the proof of  Lemma~\ref{Lemma_T-Abschaetzung_ohne_m}. There is one key exception: for every situation, the program tracks the residue classes modulo powers of two for a given number $n_i$. If the smallest positive member of this residue class is already larger than the needed bound of $3781\cdot 2^{60}$, this case no longer needs to be considered, since then one can use the trivial bound $T(n_i)\leq \tfrac{k_i}{n_i}<k_i \cdot \tfrac{1}{3781 \cdot 2^{60}}$.

This bound on $X_0$ was reached after five weeks of computing time on an i9~processor of the 11th generation with 8~cores.
\end{proof}

\begin{remark}
If one wants to prove that every nontrivial cycle has at least $K> 1.375\cdot 10^{11}$  odd members with  Corollary~\ref{Corollary_Computer-Bound} it suffices to proof for all numbers less than or equal to $1536\cdot 2^{60}$ that their respective Collatz sequences reach the trivial cycle. This bound on $X_0$ is only about 40\% of the one needed by the methods in~\cite{SE}, thus saving nearly 60\% of computing time requierd to reach the next threshold for the next larger lower bound on the number of members a nontrivial Collatz cycle must have. As the current fastest project for increasing $X_0$, see~\cite{Barina-www}, needed over 27~months for completing the search up to $512\cdot2^{60}=2^{69}$, this reduction has some impact.  

In our numerical experiments, we get that by lowering the bound on $X_0$ by $2^{60}$ (that is, e.g., from $1536\cdot 2^{60}$ to $1535\cdot2^{60}$) the computing time needed increases by about $1.5$\%. Thus, the chosen bound of $X_0$ in Corollary~\ref{Corollary_Computer-Bound} cannot be decreased by a reasonable amount before this computation becomes less effective than the search in \cite{Barina-www}.
\end{remark}

\begin{remark}
All results in this paper, with the exception of Corollaries~\ref{Computer_estimate_m_T}, \ref{Corollary_Computer_greater_m_estimate}, and \ref{Corollary_Computer-Bound}, are independent of the exact value of~$X_0$, provided it is not too small. (In all other lemmas, theorems, and corollaries we needed at most $X_0>765$.) Thus, they scale up and give better absolute values, as better results on $X_0$ become available.
\end{remark}

\bigskip
\hrule
\bigskip

\noindent 
2020 \emph{Mathematics Subject Classification}:~Primary 11B83.

\medskip

\noindent 
\emph{Keywords}:~Collatz conjecture, Syracuse problem, nontrivial Collatz cycle. 
\bigskip
\hrule
\bigskip

\end{document}